\providecommand{\U}[1]{\protect \rule{.1in}{.1in}}
\newtheorem{theorem}{Theorem}[section]
\newtheorem{acknowledgement}[theorem]{Acknowledgement}
\newtheorem{definition}[theorem]{Definition}
\newtheorem{lemma}[theorem]{Lemma}
\newtheorem{proposition}[theorem]{Proposition}
\newtheorem{remark}[theorem]{Remark}
\newenvironment{proof}[1][Proof]{\noindent \textbf{#1.} }{\  \rule{0.5em}{0.5em}}
\begin{document}

\title{Maximally distributed random fields under sublinear expectation}

\author{Xinpeng Li \& Shige Peng
\footnote{Research Center for Mathematics and Interdisciplinary Sciences, Shandong
University, 266237, Qingdao, China and School of Mathematics, Shandong University, 250100, Jinan, China.\ Email: peng@sdu.edu.cn (Shige Peng)
}}

\date{ }

\maketitle

\abstract{{This paper focuses on the maximal distribution on sublinear expectation space and introduces a new type of random fields with the maximally distributed finite-dimensional distribution. The corresponding spatial maximally distributed white noise is constructed, which includes the temporal-spatial situation as a special case due to the symmetrical independence property of maximal distribution. In addition, the stochastic integrals with respect to the spatial or temporal-spatial maximally distributed white noises are established in a quite direct way without the usual assumption of adaptability for integrand.}
}

\textbf{Keywords}: {sublinear expectation, maximal distribution, maximally distributed
random field, maximally distributed white noise, stochastic integral}


\section{Introduction}

In mathematics and physics, a random field is a type of parameterized family of random variables.
When the parameter is time $t\in \mathbb{R}^+$, we call it a stochastic process, or a temporal random field.
Quite often the parameter is space $x\in \mathbb{R}^d$, or time-space $(t,x)\in \mathbb{R}^+\times \mathbb{R}^d$. In this case, we call it a spatial or temporal-spatial random field.  A typical example is the electromagnetic wave dynamically
spread everywhere in our  $\mathbb{R}^3$-space or more exactly,  in $\mathbb{R}^+\times \mathbb{R}^3$-time-space. In principle,  it is impossible to know the exact state of the electromagnetic wave of our real world , namely,
it is a nontrivial random field parameterized by the time-space $(t,x)\in \mathbb{R}^+\times \mathbb{R}^3$.

Classically, a random field is defined on a given probability space $(\Omega,{\mathcal{F}},P)$.
But for the above problem, can we really get to know the probability $P$? This involves the so called
problem of uncertainty of probabilities.

Over the past few decades, non-additive probabilities or nonlinear expectations have become active domains for studying uncertainties, and  received more and more attention in many research fields, such as mathematical economics,  mathematical finance, statistics, quantum mechanics. {A} typical example of nonlinear expectation is sublinear one, which is used to model the uncertainty phenomenon characterized by a family of probability measures $\{P_\theta\}_{\theta\in\Theta}$ in which the true measure is unknown, and such sublinear expectation is usually defined by
$$\mathbb{E}[X]:=\sup_{\theta\in\Theta}E_{P_\theta}[X].$$
This notion is also known as the upper expectation in robust statistics (see Huber \cite{Huber}), or the upper prevision in the theory of imprecise probabilities (see Walley \cite{Walley}), and has the closed relation with coherent risk measures (see Artzner et al. \cite{ADEH}, Delbaen \cite{Delbaen2002}, F\"{o}llmer and Schied \cite{Fo-Sch}). A first dynamical
nonlinear expectation, called $g$-expectation was initiated by Peng \cite{Peng1997}.

{The foundation of sublinear expectation theory with a new type of $G$-Brownian
motion and  the corresponding It\^{o}'s  stochastic calculus was laid in  Peng \cite{P2007}, which keeps the rich and elegant  properties of classical probability theory except linearity of expectation.} Peng \cite{Peng2008a} initially defined the notion of independence and  identical distribution (i.i.d.)  based  on the notion of nonlinear expectation
instead of the  capacity. Based on the notion of  new notions, the most important distribution called $G$-normal distribution introduced, which can be characterized by the so-called $G$-heat equation. The notions of $G$-expectation and $G$-Brownian motion can be regarded as a nonlinear generalization of Wiener measure and classical Brownian motion. The corresponding limit theorems as well as stochastic calculus of It\^{o}'s  type under $G$-expectation are systematically developed in Peng \cite{P2010}. Besides that, there is also another important distribution, called maximal distribution. The distribution of maximally distributed random variable $X$ can be calculated simply by
$$\mathbb{E}[\varphi(X)]=\max_{v\in[-\mathbb{E}[-X],\mathbb{E}[X]]}\varphi(v), \ \ \varphi\in C_b(\mathbb{R}).$$
The law of large numbers under sublinear expectation (see Peng \cite{P2010}) shows that if $\{X_i\}_{i=1}^\infty$ is a sequence of independent and identical distributed random variables with $\lim_{c\rightarrow\infty}\mathbb{E}[(|X_1|-c)^+]=0$, then the sample average converges to maximal distribution in law, i.e.,
$$\lim_{n\rightarrow\infty}\mathbb{E}[\varphi(\frac{X_1+\cdots+X_n}{n})]=\max_{v\in[-\mathbb{E}[-X_1],\mathbb{E}[X_1]]}\varphi(v), \ \ \forall \varphi\in C_b(\mathbb{R}).$$
We note that the finite-dimensional distribution for quadratic variation process of $G$-Brownian motion is also maximal distributed.

 Recently, Ji and Peng \cite{PJ} introduced a new $G$-Gaussian random fields, which contains a type of spatial white noise as a special case. Such white noise is a natural generalization of the classical Gaussian white noise (for example, see Walsh \cite{Walsh}, Dalang \cite{Dalang} and Da Prato and Zabczyk \cite{DaP-Z}). As pointed in \cite{PJ}, the space-indexed increments {do not} satisfy the property of independence. Once the sublinear $G$-expectation degenerates to linear case, the property of independence for the space-indexed part {turns out to be true} as in the classical probability theory.

 In this paper, we introduce a {very special but also typical} random field, called maximally distributed random field, in which the finite-dimensional distribution is maximally distributed. The corresponding space-indexed white noise is also constructed. It is worth mentioning that the space-indexed increments of maximal white noise is independent, which is essentially different from the case of $G$-Gaussian white noise. {Thanks to the symmetrical independence of maximally distributed white noise, it is natural to view the temporal-spatial maximally distributed white noise as a special case of the space-indexed maximally distributed white noise. The stochastic integrals with respect to spatial and temporal-spatial maximally distributed white noises can be constructed in a quite simple way, which generalize the stochastic integral with respect to quadratic variation process of $G$-Brownian motion introduced in Peng \cite{P2010}.} {Furthermore, due to the boundedness of maximally distributed random field, the usual assumption of adaptability for integrand can be dropped.} We emphasize that the structure of maximally distributed white noise is quite simple, it can be determined by only two parameters $\underline{\mu}$ and $\overline{\mu}$, and the calculation of the corresponding finite-dimensional distribution is taking the maximum of continuous function on the domain determined by $\underline{\mu}$ and $\overline{\mu}$. The use of maximally distributed
 {random} fields for modelling purposes in applications can be explained mainly by the simplicity of their construction and analytic tractability combined with the maximal distributions of marginal which describe many real phenomena due to the law of large numbers with uncertainty.

This paper is organized as follows. In Section 2, we review basic
notions and results of nonlinear expectation theory and the notion and properties of maximal distribution. In Section 3, we first recall the general setting of random fields under nonlinear expectations, and then introduce the maximally distributed random fields. In
Section 4, we construct the spatial maximally distributed white noise and study the corresponding properties. { The properties of spatial as well as temporal-spatial maximally distributed white noise and the related stochastic integrals are established in Section 5. }

\section{Preliminaries}

In this section, we recall some basic notions and properties in the
nonlinear expectation theory. More details can be found in Denis et
al. \cite{DHP}, Hu and Peng \cite{Hu-Peng} and Peng \cite{P2007,P2008,Peng2008a,P2010a,P2010,P2019}.

Let $\Omega$ be a given nonempty set and $\mathcal{H}$ be a linear
space of real-valued functions on $\Omega$ such that if $X\in\mathcal{H}$,
then $|X|\in\mathcal{H}$. $\mathcal{H}$ can be regarded as the space
of random variables. In this paper, we consider a more convenient
assumption: if random variables $X_{1}$,$\cdots$,$X_{d}\in\mathcal{H}$,
then $\varphi(X_{1},X_{2},\cdots,X_{d})\in\mathcal{H}$ for each $\varphi\in C_{b.Lip}(\mathbb{R}^{d})$. Here $C_{b.Lip}(\mathbb{R}^{d})$
is the space of all bounded and Lipschitz functions on $\mathbb{R}^{d}$.

We call $X=(X_{1},\cdots,X_{n})$, $X_{i}\in\mathcal{H}$, $1\leq i\leq n$, an $n$-dimensional random vector, denoted by $X\in\mathcal{H}^{n}$.

\begin{definition} \label{sublinear expectation} A {nonlinear
expectation} $\hat{E}$ on $\mathcal{H}$ is a functional
$\hat{E}:\mathcal{H}\rightarrow\mathbb{R}$ satisfying
the following properties: for each $X,Y\in\mathcal{H}$,
\begin{description}
\item [{(i)}] {Monotonicity:}\quad{}$\hat{E}[X]\geq\hat{E}[Y]\ \ \text{if}\ X\geq Y$;
\item [{(ii)}] {Constant preserving:}\quad{}$\hat{E}[c]=c\ \ \text{for}\ c\in\mathbb{R}$;
\end{description}
The triplet $(\Omega,\mathcal{H},\hat{E})$ is called
a {nonlinear expectation space}.
If we further assume that
\begin{description}
\item [{(iii)}] {Sub-additivity:}\quad{}$\hat{E}[X+Y]\leq\hat{E}[X]+\hat{E}[Y]$;
\item [{(iv)}]  {Positive homogeneity:}\quad{}$\hat{E}[\lambda X]=\lambda\hat{E}[X]\ \ \text{for}\ \lambda\geq0$.
\end{description}
Then $\hat{E}$ is called a sublinear expectation, and the corresponding triplet $(\Omega,\mathcal{H},\hat{E})$ is
called a {sublinear expectation space}.
\end{definition}

Let $(\Omega,\mathcal{H},\hat{E})$
be a nonlinear (resp., sublinear) expectation space. For each given
$n$-dimensional random vector $X$, we define a functional on $C_{b.Lip}(\mathbb{R}^{n})$
by
\[
\mathbb{{F}}_{X}[\varphi]:=\hat{E}[\varphi(X)],\text{ for each }\varphi\in C_{b.Lip}(\mathbb{R}^{n}).
\]
$\mathbb{{F}}_{X}$ is called the distribution of $X$.
It is easily seen that $(\mathbb{R}^{n},C_{b.Lip}(\mathbb{R}^{n}),\mathbb{{F}}_{X})$
forms a nonlinear (resp., sublinear) expectation space. {If $\mathbb{{F}}_{X}$ is not a linear functional on $C_{b.Lip}(\mathbb{R}^{n})$, we say $X$ has distributional uncertainty.}

\begin{definition}\label{id}
Two $n$-dimensional random vectors
$X_{1}$ and $X_{2}$ defined on nonlinear expectation spaces $(\Omega_{1},\mathcal{H}_{1},\hat{E}_{1})$
and $(\Omega_{2},\mathcal{H}_{2},\hat{E}_{2})$ respectively,
are called {identically distributed}, denoted by $X_{1}\overset{d}{=}X_{2}$,
if $\mathbb{{F}}_{X_{1}}=\mathbb{{F}}_{X_{2}}$, i.e.,
\[
\hat{E}_{1}[\varphi(X_{1})]=\hat{E}_{2}[\varphi(X_{2})], \ \ \forall\varphi\in C_{b.Lip}(\mathbb{R}^{n}).
\]
\end{definition}

\begin{definition} Let $(\Omega,\mathcal{H},\hat{E})$
be a nonlinear expectation space. An $n$-dimensional random vector
$Y$ is said to be {independent} from another $m$-dimensional
random vector $X$ under the expectation $\hat{E}$
if, for each test function $\varphi\in C_{b.Lip}(\mathbb{R}^{m+n})$,
we have
\[
\hat{E}[\varphi(X,Y)]=\hat{E}[\hat{E}[\varphi(x,Y)]_{x=X}].
\]
\end{definition}

\begin{remark}
Peng \cite{Peng2008a} (see also Peng \cite{P2010}) introduced the notions of the distribution
and the independence of random variables under
a nonlinear expectation, which play a crucially important role in the nonlinear expectation theory.
\end{remark}

For simplicity, the sequence $\{X_i\}_{i=1}^n$ is called independence if $X_{i+1}$ is independent from $(X_1,\cdots,X_i)$ for $i=1,2,\cdots,n-1$. Let $\bar{X}$ and $X$ be two $n$-dimensional random vectors on
$(\Omega,\mathcal{H},\hat{E})$. $\bar{X}$ is called an
independent copy of $X$, if $\bar{X}\overset{d}{=}X$ and $\bar{X}$
is independent from $X$.

\begin{remark}\label{remark} It is important to note that ``$Y$ is independent
from $X$'' does not imply that ``$X$ is independent from $Y$''
(see Peng \cite{P2010}). \end{remark}

In this paper, we focus on an important distribution on sublinear expectation
space $(\Omega,\mathcal{H},\hat{E})$, called maximal distribution.

\begin{definition} An $n$-dimensional random vector $X=(X_{1},\cdots,X_{n})$
on a sublinear expectation space $(\Omega,\mathcal{H},\hat{E})$
is said to be {maximally distributed}, if there exists a bounded and
closed convex subset $\Lambda\subset\mathbb{R}^{n}$ such that, for every continuous function $\varphi\in C(\mathbb{R}^n)$,
\[
\hat{E}[\varphi(X)]=\max_{x\in\Lambda}\varphi(x).
\]
\end{definition}
{
\begin{remark}
Here $\Lambda$ characterizes the uncertainty of $X$. It is easy to check that this maximally distributed random vector $X$ satisfies
$$X+\bar{X}\overset{d}{=}2X,$$
where $\bar{X}$ is an independent copy of $X$. Conversely, suppose a random variable $X$ satisfying $X+\bar{X}\overset{d}{=}2X$, if we further assume the uniform convergence condition
$\lim_{c\rightarrow\infty}\hat{E}[(|X|-c)^+]=0$ holds, then we can deduce that $X$ is maximally distributed by the law of large numbers (see Peng \cite{P2010}).  An interesting problem is that is $X$  still maximally distributed without such uniform convergence condition? We emphasize that the law of large numbers does not hold in this case, a counterexample can be found in Li and Zong \cite{LZ}.
\end{remark}}

\begin{proposition} \label{G heat equation} Let $g(p)=\max_{v\in\Lambda}v\cdot p$
be given. Then an $n$-dimensional random variable is maximally distributed
if and only if for each $\varphi\in C(\mathbb{R}^{n})$, the following
function
\begin{equation}
u(t,x):=\hat{E}[\varphi(x+tX)]=\max_{v\in\Lambda}\varphi(x+tv),\ (t,x)\in\lbrack0,\infty)\times\mathbb{R}^{n}\label{G solution}
\end{equation}
is the unique viscosity solution of the the
following nonlinear partial differential equation
\begin{equation}
\partial_{t}u-g(D_{x}u)=0,\ \ u|_{t=0}=\varphi(x).\label{G equation}
\end{equation}
\end{proposition}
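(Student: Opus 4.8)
The plan is to reduce both implications to a single deterministic object together with a comparison principle. Write $w(t,x):=\max_{v\in\Lambda}\varphi(x+tv)$ for the candidate appearing on the right-hand side of (\ref{G solution}); it is defined purely in terms of $\Lambda$ and $\varphi$, with no probabilistic input. The first and central step is to establish the semigroup (dynamic programming) identity
\[
w(t+s,x)=\max_{v\in\Lambda}w(s,x+tv),\qquad t,s\ge0,\ x\in\R^n,
\]
using only that $\Lambda$ is compact and convex. The inequality $\max_{v\in\Lambda}w(s,x+tv)\le w(t+s,x)$ holds because for $v,v'\in\Lambda$ the convex combination $(tv+sv')/(t+s)$ again lies in $\Lambda$, so $x+tv+sv'=x+(t+s)\big[(tv+sv')/(t+s)\big]$ is an admissible point for $w(t+s,\cdot)$; the reverse inequality follows since $w(s,x+t\bar v)\ge\varphi(x+(t+s)\bar v)$ for every $\bar v\in\Lambda$. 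Along the way I would record that $w$ is continuous on $[0,\infty)\times\R^n$ (a maximum of a continuous function over the compact set $\Lambda$ whose argument depends continuously on $(t,x)$), that $w(0,x)=\varphi(x)$, and that $g(p)=\max_{v\in\Lambda}v\cdot p$ is convex and Lipschitz with constant $\max_{v\in\Lambda}|v|$.

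For the direction "maximally distributed $\Rightarrow$ (\ref{G solution})--(\ref{G equation})", assume $X$ is maximally distributed with set $\Lambda$. Applying the definition to the continuous map $y\mapsto\varphi(x+ty)$ (here $t\ge0$) gives at once $u(t,x)=\hat{E}[\varphi(x+tX)]=\max_{v\in\Lambda}\varphi(x+tv)=w(t,x)$, which is the explicit formula (\ref{G solution}). To see that $w$ solves (\ref{G equation}) in the viscosity sense, I would take a smooth $\psi$ touching $w$ from above at an interior point $(t_0,x_0)$ with $t_0>0$, use the identity in the backward form $w(t_0,x_0)=\max_{v\in\Lambda}w(t_0-\delta,x_0+\delta v)$ together with $w\le\psi$ to obtain $\psi(t_0,x_0)\le\max_{v\in\Lambda}\psi(t_0-\delta,x_0+\delta v)$, and then Taylor-expand $\psi$ to first order. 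Because $\Lambda$ is compact the remainder is $O(\delta^2)$ uniformly in $v$, so it may be pulled out of the maximum; dividing by $\delta$ and letting $\delta\downarrow0$ yields $\partial_t\psi(t_0,x_0)-g(D_x\psi(t_0,x_0))\le0$, the subsolution inequality, and the supersolution inequality follows symmetrically from a test function touching from below. With continuity and the initial condition this shows $w$ is a viscosity solution.

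Uniqueness is then obtained from the comparison principle for first-order Hamilton--Jacobi equations whose Hamiltonian $g$ is convex, Lipschitz, and depends only on $D_xu$; I would invoke the version available in the monographs of Peng cited above (or standard viscosity-solution theory). This makes $w$ the unique viscosity solution and completes this direction. For the converse, note that the computation just described shows $w(t,x)=\max_{v\in\Lambda}\varphi(x+tv)$ is a viscosity solution of the Cauchy problem (\ref{G equation}) regardless of any probabilistic hypothesis; hence if $u(t,x)=\hat{E}[\varphi(x+tX)]$ is assumed to be the unique viscosity solution, uniqueness forces $\hat{E}[\varphi(x+tX)]=\max_{v\in\Lambda}\varphi(x+tv)$ for every $(t,x)$ and every $\varphi\in C(\R^n)$. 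Evaluating at $t=1$, $x=0$ gives $\hat{E}[\varphi(X)]=\max_{v\in\Lambda}\varphi(v)$, i.e. $X$ is maximally distributed with set $\Lambda$.

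The main obstacle is the uniqueness/comparison step, the only genuinely nonelementary ingredient; everything else rests on the convexity identity for $\Lambda$ and a one-line Taylor expansion, whose sole delicate point — the uniformity of the remainder over the maximizing set — is guaranteed by compactness of $\Lambda$.
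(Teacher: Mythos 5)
Your proposal is correct in substance, but note that the paper itself gives no proof of this proposition: it is stated as a known result, with the justification deferred to Peng's monograph \cite{P2010}, so there is no internal proof to compare against. Your argument --- isolating the deterministic function $w(t,x)=\max_{v\in\Lambda}\varphi(x+tv)$, proving the dynamic-programming identity $w(t+s,x)=\max_{v\in\Lambda}w(s,x+tv)$ from convexity and compactness of $\Lambda$, verifying the viscosity sub- and supersolution inequalities by a Taylor expansion that is uniform over $\Lambda$, and then settling both directions of the equivalence by uniqueness --- is sound, and it has the merit of making the PDE part entirely probability-free, so that the probabilistic content of the proposition reduces to the single identity $u=w$ (forward direction: by the definition of maximal distribution applied to $y\mapsto\varphi(x+ty)$; converse: by uniqueness, then evaluation at $t=1$, $x=0$). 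Two points deserve more care than your sketch gives them. First, the comparison/uniqueness step: since $\varphi$ ranges over all of $C(\mathbb{R}^{n})$, with no boundedness, uniform continuity, or growth hypothesis, the textbook comparison theorems (stated for bounded uniformly continuous data, or for data in a prescribed growth class) do not apply verbatim; what rescues the argument is precisely the fact you record in passing, namely that $g$ is Lipschitz with constant $\kappa=\max_{v\in\Lambda}|v|$, which gives finite speed of propagation and allows comparison to be localized to cones of dependence, yielding uniqueness in $C([0,\infty)\times\mathbb{R}^{n})$ without growth restrictions. You should either carry out this localization or cite a statement that explicitly covers merely continuous initial data; as written, this is the one real gap. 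Second, a cosmetic point: the $O(\delta^{2})$ remainder presumes $C^{2}$ test functions (harmless, since test functions may be taken smooth); with $C^{1}$ test functions the remainder is $o(\delta)$ uniformly on $\Lambda$, which suffices equally.
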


This property implies that, each sublinear function $g$ on $\mathbb{R}^{n}$
determines uniquely a maximal distribution.
The following property is easy to check.

\begin{proposition} Let $X$ be an $n$-dimensional maximally
distributed random vector characterized by its generating function
\[
g(p):=\hat{E}[X\cdot p],\,\,\,p\in\mathbb{R}^{n}.
\]
Then, for any function $\psi\in C(\mathbb{R}^{n})$, $Y=\psi(X)$
is also an $\mathbb{R}$-valued maximally distributed random variable:
\[
\mathbb{E}[\varphi(Y)]=\max_{v\in[\underline{{\rho}},\overline{\rho}]}\varphi(v),\qquad\overline{\rho}=\max_{\gamma\in\Lambda}\psi(\gamma),\quad\underline{{\rho}}=\min_{\gamma\in\Lambda}\psi(\gamma).
\]
\end{proposition}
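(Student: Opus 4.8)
The plan is to reduce the statement to the defining property of the maximal distribution of $X$ together with a connectedness argument for the image $\psi(\Lambda)$. First I would fix an arbitrary test function $\varphi\in C(\mathbb{R})$ and note that, since the composition of continuous functions is continuous, $\varphi\circ\psi\in C(\mathbb{R}^{n})$. Writing $Y=\psi(X)$, this lets me apply the definition of $X$ being maximally distributed with uncertainty set $\Lambda$ directly to the test function $\varphi\circ\psi$:
\[
\hat{E}[\varphi(Y)]=\hat{E}[(\varphi\circ\psi)(X)]=\max_{x\in\Lambda}\varphi(\psi(x)).
\]

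The key geometric step is to identify the image $\psi(\Lambda)$. Since $\Lambda$ is bounded, closed and convex, it is compact and (being convex) connected; as $\psi$ is continuous, $\psi(\Lambda)$ is therefore a compact connected subset of $\mathbb{R}$, i.e.\ a closed bounded interval. By compactness the extreme values of $\psi$ on $\Lambda$ are attained, so the two endpoints of this interval are exactly $\underline{\rho}=\min_{\gamma\in\Lambda}\psi(\gamma)$ and $\overline{\rho}=\max_{\gamma\in\Lambda}\psi(\gamma)$, giving $\psi(\Lambda)=[\underline{\rho},\overline{\rho}]$.

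Finally I would reparametrize the maximum over the image. As $x$ ranges over $\Lambda$ the quantity $\varphi(\psi(x))$ depends on $x$ only through $u=\psi(x)$, so
\[
\max_{x\in\Lambda}\varphi(\psi(x))=\max_{u\in\psi(\Lambda)}\varphi(u)=\max_{v\in[\underline{\rho},\overline{\rho}]}\varphi(v),
\]
which is the asserted formula. Because $[\underline{\rho},\overline{\rho}]$ is a bounded closed convex subset of $\mathbb{R}$ and $\varphi\in C(\mathbb{R})$ was arbitrary, this verifies the definition of maximal distribution for $Y$.

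I expect the only point requiring genuine care to be the identity $\psi(\Lambda)=[\underline{\rho},\overline{\rho}]$, where convexity of $\Lambda$ is essential: it is what guarantees connectedness and hence, via the intermediate value theorem, that the whole interval between the extremes is covered. If $\Lambda$ were merely compact, $\psi(\Lambda)$ could be a proper subset with gaps, and $\max_{v\in[\underline{\rho},\overline{\rho}]}\varphi(v)$ could strictly exceed the true value $\max_{x\in\Lambda}\varphi(\psi(x))$ whenever $\varphi$ peaks inside such a gap; so convexity is precisely what makes the representation by the interval $[\underline{\rho},\overline{\rho}]$ correct.
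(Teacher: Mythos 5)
Your proof is correct. The paper itself supplies no argument for this proposition (it is prefaced only by ``The following property is easy to check''), and your verification---applying the defining property of the maximal distribution of $X$ to the composed test function $\varphi\circ\psi$, then using compactness and connectedness (via convexity) of $\Lambda$ to identify $\psi(\Lambda)=[\underline{\rho},\overline{\rho}]$ and reparametrizing the maximum---is precisely the natural argument being alluded to, with the useful added remark that connectedness is what rules out gaps in the image and makes the interval representation exact.
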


\begin{proposition}\label{p1}
Let $X=(X_{1},\cdots,X_{n})$ be an $n$-dimensional maximal distribution
on a sublinear expectation space $(\Omega,\mathcal{H},\hat{E}).$ { If} the corresponding generating function satisfies, for all $p=(p_1,\cdots,p_n)\in\mathbb{R}^{n},$
$$g(p)=\hat{E}[X_1 p_1+\cdots+X_n p_n]=\hat{E}[X_1 p_1]+\cdots+\hat{E}[X_np_n],$$
then $\{X_i\}_{i=1}^n$ is a sequence of independent maximally distributed random variables.

Moreover, for any permutation $\pi$ of $\{1,2,\cdots,n\}$, the sequence $\{X_{\pi(i)}\}_{i=1}^n$ is also independent.
\end{proposition}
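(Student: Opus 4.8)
The plan is to first translate the hypothesis into a geometric statement about the convex set $\Lambda$ appearing in the definition of the maximal distribution of $X$, and then to exploit the resulting product structure to verify the defining identity for independence directly. Throughout I would write $\bar{\mu}_i := \hat{E}[X_i]=\max_{v\in\Lambda}v_i$ and $\underline{\mu}_i := -\hat{E}[-X_i]=\min_{v\in\Lambda}v_i$, so that $[\underline{\mu}_i,\bar{\mu}_i]$ is exactly the projection of the bounded closed convex set $\Lambda$ onto the $i$-th coordinate axis.

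The first and central step is to identify $\Lambda$ explicitly. Since $g(p)=\max_{v\in\Lambda}v\cdot p$ is precisely the support function of $\Lambda$, and since $\hat{E}[X_ip_i]=\max_{v\in\Lambda}v_ip_i=\max_{w\in[\underline{\mu}_i,\bar{\mu}_i]}wp_i$ (the maximand depends only on $v_i$, which ranges over the projection $[\underline{\mu}_i,\bar{\mu}_i]$), the hypothesis becomes
$$\max_{v\in\Lambda}v\cdot p=\sum_{i=1}^{n}\max_{w\in[\underline{\mu}_i,\bar{\mu}_i]}wp_i=\max_{v\in R}v\cdot p,\qquad R:=\prod_{i=1}^{n}[\underline{\mu}_i,\bar{\mu}_i],$$
where the last equality holds because maximizing a separated sum over a product set equals the sum of the coordinatewise maxima. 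Thus $\Lambda$ and the rectangle $R$ share the same support function; as both are bounded closed convex sets, a standard convexity fact (equivalently, the uniqueness stated after Proposition \ref{G heat equation}) forces $\Lambda=R$. Consequently, for every $\varphi\in C(\mathbb{R}^n)$,
$$\hat{E}[\varphi(X)]=\max_{x\in R}\varphi(x)=\max_{x_1\in[\underline{\mu}_1,\bar{\mu}_1]}\cdots\max_{x_n\in[\underline{\mu}_n,\bar{\mu}_n]}\varphi(x_1,\dots,x_n).$$

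Next I would verify independence from this iterated-maximum representation. Fixing $1\le i\le n-1$ and $\varphi\in C_{b.Lip}(\mathbb{R}^{i+1})$, I set $\psi(x_1,\dots,x_i):=\hat{E}[\varphi(x_1,\dots,x_i,X_{i+1})]=\max_{w\in[\underline{\mu}_{i+1},\bar{\mu}_{i+1}]}\varphi(x_1,\dots,x_i,w)$. A routine check shows $\psi\in C_{b.Lip}(\mathbb{R}^i)$, inheriting boundedness and the Lipschitz constant of $\varphi$, so the maximal-distribution formula applies to the subvector $(X_1,\dots,X_i)$, which is itself maximally distributed on the projection $\prod_{j\le i}[\underline{\mu}_j,\bar{\mu}_j]$. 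Then $\hat{E}[\psi(X_1,\dots,X_i)]=\max_{x_1,\dots,x_i}\max_{w}\varphi=\max_{x\in\prod_{j\le i+1}[\underline{\mu}_j,\bar{\mu}_j]}\varphi$, which coincides with $\hat{E}[\varphi(X_1,\dots,X_{i+1})]$ by the same product formula. This is exactly the identity defining that $X_{i+1}$ is independent from $(X_1,\dots,X_i)$, so the sequence is independent.

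Finally, the permutation claim is immediate from the symmetry of the product set: for any permutation $\pi$ the set $R$ is still a product of intervals in the reordered coordinates, so the iterated maximum may be taken in the order $x_{\pi(1)},\dots,x_{\pi(n)}$, and repeating the previous paragraph in this order shows $\{X_{\pi(i)}\}_{i=1}^{n}$ is independent. I expect the only genuine step to be the identification $\Lambda=R$; everything afterwards is bookkeeping on nested maxima. The one technical point to watch is that the definition of maximal distribution is stated for $\varphi\in C(\mathbb{R}^n)$ while independence is tested on $C_{b.Lip}$, so I must confirm that the intermediate functions $\psi$ remain continuous before applying the maximum formula.
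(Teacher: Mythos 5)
Your proposal is correct and follows essentially the same route as the paper: the paper likewise rewrites the additive generating function as the support function of the rectangle $\otimes_{i=1}^{n}[\underline{\mu}_i,\overline{\mu}_i]$, invokes the uniqueness behind Proposition \ref{G heat equation} to conclude $\hat{E}[\varphi(X)]=\max_{v\in\otimes_{i=1}^{n}[\underline{\mu}_i,\overline{\mu}_i]}\varphi(v)$ for all $\varphi\in C(\mathbb{R}^n)$, and then obtains independence (and its permutation invariance) from the iterated-maximum structure. The only differences are cosmetic: you identify $\Lambda$ with the rectangle via support-function uniqueness rather than via the PDE characterization, and you spell out the nested-maximum verification of independence that the paper leaves as ``easy to check.''
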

\begin{proof}
For $i=1,\cdots, n$, we denote $\overline{\mu}_i=\hat{E}[X_i]$ and $\underline{\mu}_i=-\hat{E}[-X_i]$. Since
\begin{align*}
g(p)&=\hat{E}[X_1\cdot p_1+\cdots+X_n\cdot p_n]=\hat{E}[X_1\cdot p_1]+\hat{E}[X_2\cdot p_2]+\cdots+\hat{E}[X_n\cdot p_n]\\
&=\sum_{i=1}^n\max_{v_i\in[\underline{\mu}_i,\overline{\mu}_i]}p_iv_i=\max_{(v_1,\cdots,v_n)\in\otimes_{i=1}^n[\underline{\mu}_i,\overline{\mu}_i]}(p_1v_1+\cdots+p_nv_n),
\end{align*}
it follows Proposition \ref{G heat equation} that $(X_1,\cdots,X_n)$ is an $n$-dimensional maximally distributed random vector such that, $\forall \varphi\in C(\mathbb{R}^n)$,
$$\hat{E}[\varphi(X_1,\cdots,X_n)]=\max_{(v_1,\cdots,v_n)\in\otimes_{i=1}^n[\underline{\mu}_i,\overline{\mu}_i]}\varphi(v_1,\cdots,v_n).$$
It is easy to check that $\{X_i\}_{i=1}^n$ is independent, and so does the permuted sequence $\{X_{\pi(i)}\}_{i=1}^n$.
\end{proof}
\begin{remark}
The independence of maximally distributed random variables is {symmetrical}. But, as discussed in Remark~\ref{remark}, under a
sublinear expectation, $X$ is independent from $Y$ does not automatically imply that $Y$ is also
independent from $X$. In fact, Hu and Li \cite{HL} proved that, if $X$ is independent  from $Y$, and $Y$ is also independent from $X$, {and both of $X$ and $Y$ have distributional uncertainty, then $(X,Y)$ must be maximally distributed.}
\end{remark}

\section{Maximally distributed random fields}

In this section, we first recall the general setting of random fields defined on a nonlinear expectation space introduced by Ji and Peng \cite{PJ}.

\begin{definition} Under a given nonlinear expectation space  $(\Omega,\mathcal{H},\hat{E})$, a collection of $m$-dimensional random vectors $W=(W_{\gamma})_{\gamma\in\Gamma}$  is called an $m$-dimensional random field indexed by $\Gamma$, if for each   $\gamma\in \Gamma$,  $W_{\gamma}\in\mathcal{H}^{m}$.
\end{definition}

In order to introduce the notion of finite-dimensional distribution of a random field $W$, we denote the family of all sets of finite indices by
\[
\mathcal{J}_{\Gamma}:=\{\underline{\gamma}=(\gamma_{1},\cdots,\gamma_{n}):\ \forall n\in\mathbb{N},\ \gamma_{1},\cdots,\gamma_{n}\in\Gamma,\  {\gamma_{i}\neq\gamma_{j}\ \text{if}\ i\neq j}\}.
\]

\begin{definition} Let $(W_{\gamma})_{\gamma\in\Gamma}$ be an $m$-dimensional
 random field defined on a nonlinear expectation space $(\Omega,\mathcal{H},\hat{E})$.
For each $\underline{\gamma}=(\gamma_{1},\cdots,\gamma_{n})\in\mathcal{J}_{\Gamma}$
and the corresponding random vector $W_{\underline{\gamma}}=(W_{\gamma_{1}},\cdots,W_{\gamma_{n}})$,
we define a functional on $C_{b.Lip}(\mathbb{R}^{n\times m})$ by
\[
\mathbb{F}_{\underline{\gamma}}^{W}[\varphi]=\hat{E}[\varphi(W_{\underline{\gamma}})]
\]
The collection   $(\mathbb{F}_{\underline{\gamma}}^{W}[\varphi])_{\underline{\gamma}\in\mathcal{J}_{\Gamma}}$
is called the  family of {finite-dimensional distributions} of $(W_{\gamma})_{\gamma\in\Gamma}$.
\end{definition}
It is clear that, for each $\underline{\gamma}\in\mathcal{J}_{\Gamma}$, the triple $(\mathbb{R}^{n\times m},C_{b.Lip}(\mathbb{R}^{n\times m}),
\mathbb{F}_{\underline{\gamma}}^{W})$
constitutes a nonlinear expectation space.

Let $(W_{\gamma}^{(1)})_{\gamma\in\Gamma}$ and $({W}_{\gamma}^{(2)})_{\gamma\in\Gamma}$ be two $m$-dimensional random fields
 defined on nonlinear expectation spaces $(\Omega_{1},\mathcal{H}_{1},\hat{E}_{1})$
and $(\Omega_{2},\mathcal{H}_{2},\hat{E}_{2})$ respectively.
They are said to be {identically distributed}, denoted by $(W_{\gamma}^{(1)})_{\gamma\in\Gamma}\overset{d}{=}({W}_{\gamma}^{(2)})_{\gamma\in\Gamma}$,
or simply $W^{(1)}\overset{d}{=}W^{(2)}$, if for each $\underline{\gamma}=(\gamma_{1},\cdots,\gamma_{n})\in\mathcal{J}_{\Gamma}$,
\[
\hat{E}_{1}[\varphi(W_{\underline{\gamma}}^{(1)})]=\hat{E}_{2}[\varphi({W}_{\underline{\gamma}}^{(2)})], \ \ \forall \varphi\in C_{b.Lip}(\mathbb{R}^{n\times m}).
\]

For any given $m$-dimensional  random field $W=(W_{\gamma})_{\gamma\in\Gamma}$,
the family of its finite-dimensional distributions satisfies the following
properties of consistency:
\begin{description}
\item [{{(1)} Compatibility:}] For each $(\gamma_{1},\cdots,\gamma_{n},\gamma_{n+1})\in\mathcal{J}_{\Gamma}$
and $\varphi\in C_{b.Lip}(\mathbb{R}^{n\times m}),$
\begin{equation}\mathbb{F}_{\gamma_{1},\cdots,\gamma_{n}}^{W}[\varphi]=\mathbb{F}_{\gamma_{1},\cdots,\gamma_{n},\gamma_{n+1}}^{W}[\widetilde{\varphi}],\label{F consistent 1}
\end{equation}
where the function $\widetilde{\varphi}$ is a function on $\mathbb{R}^{(n+1)\times m}$ defined  for any $y_{1},\cdots,y_{n},y_{n+1}\in\mathbb{R}^{m}$,
$$\widetilde{\varphi}(y_{1},\cdots,y_{n},y_{n+1})=\varphi(y_{1},\cdots,y_{n});$$

\item [{{(2)} Symmetry:}] For each $(\gamma_{1},\cdots,\gamma_{n})\in\mathcal{J}_{\Gamma}$,
$\varphi\in C_{b.Lip}(\mathbb{R}^{n\times m})$ and each permutation
$\pi$ of $\{1,\cdots,n\}$,
\begin{equation}\mathbb{F}_{\gamma_{\pi(1)},\cdots,\gamma_{\pi(n)}}^{W}[\varphi]=\mathbb{F}_{\gamma_{1},\cdots,\gamma_{n}}^{W}[\varphi_{\pi}]\label{F consistent 2}
\end{equation}
where we denote $\varphi_{\pi}(y_{1},\cdots,y_{n})=\varphi(y_{\pi(1)},\cdots,y_{\pi(n)})$,
for $y_{1},\cdots,y_{n}\in\mathbb{R}^{m}$.
\end{description}
The following theorem generalizes the classical Kolmogorov's existence theorem to the situation
of sublinear expectation space, which is a variant of Theorem 3.8
in Peng \cite{Peng2011}. The proof can be founded in Ji and Peng \cite{PJ}.

\begin{theorem} \label{Kolmogorov existence} Let $\{\mathbb{F}_{\underline{\gamma}},\underline{\gamma}\in\mathcal{J}_{\Gamma}\}$
be a family of finite-dimensional distributions satisfying the compatibility condition \eqref{F consistent 1} and
the symmetry condition \eqref{F consistent 2}. Then there exists
an $m$-dimensional  random field $W=(W_{\gamma})_{\gamma\in\Gamma}$
defined on a nonlinear expectation space $(\Omega,\mathcal{H},\hat{E})$
whose family of finite-dimensional distributions coincides with $\{\mathbb{F}_{\underline{\gamma}},\underline{\gamma}\in\mathcal{J}_{\Gamma}\}$.
Moreover, if we assume that each $\mathbb{F}_{\underline{\gamma}}$
in $\{\mathbb{F}_{\underline{\gamma}},\underline{\gamma}\in\mathcal{J}_{\Gamma}\}$
is sublinear, then the corresponding expectation $\hat{E}$
on the space of random variables $(\Omega,\mathcal{H})$ is also sublinear.
\end{theorem}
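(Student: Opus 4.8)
The plan is to carry out the classical Daniell--Kolmogorov construction on the canonical path space, but with the point that, in the absence of $\sigma$-additivity, well-definedness of the extended functional must be forced entirely by the compatibility and symmetry hypotheses. First I would take the canonical space $\Omega=(\mathbb{R}^{m})^{\Gamma}$ of all $\mathbb{R}^{m}$-valued functions on $\Gamma$ and define the coordinate random field $W_{\gamma}(\omega):=\omega(\gamma)$. As the space of random variables I would take the cylinder functions $\mathcal{H}:=\{\varphi(W_{\gamma_{1}},\cdots,W_{\gamma_{n}}):\ n\in\mathbb{N},\ (\gamma_{1},\cdots,\gamma_{n})\in\mathcal{J}_{\Gamma},\ \varphi\in C_{b.Lip}(\mathbb{R}^{n\times m})\}$, and I would define the candidate expectation on such an $X=\varphi(W_{\gamma_{1}},\cdots,W_{\gamma_{n}})$ by $\hat{E}[X]:=\mathbb{F}_{\gamma_{1},\cdots,\gamma_{n}}[\varphi]$, which is a finite real number because $\varphi$ is bounded.

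The main obstacle, and really the heart of the argument, is to show that $\hat{E}$ is well defined, i.e. that its value is independent of the representation chosen for a cylinder function. Any two representations of the same $X\in\mathcal{H}$ differ only by a permutation of the indices and/or by the inclusion of extra indices on which the test function does not genuinely depend, and these are exactly the two ambiguities that the two hypotheses are designed to kill: the compatibility condition \eqref{F consistent 1} neutralizes padding by redundant indices, while the symmetry condition \eqref{F consistent 2} neutralizes reorderings. Concretely, given $X=\varphi(W_{\underline{\gamma}})=\psi(W_{\underline{\delta}})$ with $\underline{\gamma}=(\gamma_{1},\cdots,\gamma_{n})$ and $\underline{\delta}=(\delta_{1},\cdots,\delta_{k})$, I would pass to a common ordered tuple $\underline{\eta}\in\mathcal{J}_{\Gamma}$ of length $N$ enumerating $\{\gamma_{1},\cdots,\gamma_{n}\}\cup\{\delta_{1},\cdots,\delta_{k}\}$, use \eqref{F consistent 1} to extend and \eqref{F consistent 2} to reorder so that $\mathbb{F}_{\underline{\gamma}}[\varphi]=\mathbb{F}_{\underline{\eta}}[\tilde{\varphi}]$ and $\mathbb{F}_{\underline{\delta}}[\psi]=\mathbb{F}_{\underline{\eta}}[\tilde{\psi}]$ for suitable $\tilde{\varphi},\tilde{\psi}\in C_{b.Lip}(\mathbb{R}^{N\times m})$. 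Since $\Omega$ consists of \emph{all} functions $\Gamma\to\mathbb{R}^{m}$, the coordinate map $\omega\mapsto W_{\underline{\eta}}(\omega)$ is surjective onto $\mathbb{R}^{N\times m}$, so the identity $\tilde{\varphi}(W_{\underline{\eta}})=X=\tilde{\psi}(W_{\underline{\eta}})$ on $\Omega$ forces $\tilde{\varphi}=\tilde{\psi}$ everywhere, whence $\mathbb{F}_{\underline{\gamma}}[\varphi]=\mathbb{F}_{\underline{\eta}}[\tilde{\varphi}]=\mathbb{F}_{\underline{\eta}}[\tilde{\psi}]=\mathbb{F}_{\underline{\delta}}[\psi]$.

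With well-definedness secured, the remaining checks are routine transfers of the marginal properties via the same common-index reduction. To see that $\mathcal{H}$ is a linear space closed under $C_{b.Lip}$-composition, I would express any finite collection of cylinder functions simultaneously over a single tuple $\underline{\eta}$ and observe that composing $C_{b.Lip}$ functions with $C_{b.Lip}$ functions stays in $C_{b.Lip}(\mathbb{R}^{N\times m})$. Monotonicity and constant preservation then descend from the axioms of each $\mathbb{F}_{\underline{\eta}}$: if $X\geq Y$ on $\Omega$, surjectivity of the coordinate map gives $\tilde{\varphi}\geq\tilde{\psi}$ on all of $\mathbb{R}^{N\times m}$, so $\hat{E}[X]\geq\hat{E}[Y]$; and a constant is represented by a constant test function, on which every $\mathbb{F}_{\underline{\eta}}$ acts as the identity. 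By the very definition of $\hat{E}$ one has $\mathbb{F}^{W}_{\gamma_{1},\cdots,\gamma_{n}}[\varphi]=\hat{E}[\varphi(W_{\underline{\gamma}})]=\mathbb{F}_{\gamma_{1},\cdots,\gamma_{n}}[\varphi]$, so the family of finite-dimensional distributions of $W$ coincides with the prescribed one.

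Finally, for the sublinear case I would run the identical common-index reduction: given $X,Y\in\mathcal{H}$ with representatives $\tilde{\varphi},\tilde{\psi}$ over $\underline{\eta}$, the sub-additivity $\hat{E}[X+Y]\leq\hat{E}[X]+\hat{E}[Y]$ and positive homogeneity $\hat{E}[\lambda X]=\lambda\hat{E}[X]$ for $\lambda\geq0$ follow at once by applying the assumed sub-additivity and positive homogeneity of $\mathbb{F}_{\underline{\eta}}$ to $\tilde{\varphi}+\tilde{\psi}$ and $\lambda\tilde{\varphi}$. Since the construction of $\hat{E}$ never used more than the nonlinear-expectation axioms of the marginals, this shows $\hat{E}$ inherits sublinearity exactly when every $\mathbb{F}_{\underline{\gamma}}$ is sublinear, which completes the proof.
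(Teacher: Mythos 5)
Your proposal is correct, and it takes essentially the same approach as the proof the paper relies on: the paper defers to Ji and Peng \cite{PJ}, whose argument is exactly this canonical construction---the coordinate random field on $(\mathbb{R}^{m})^{\Gamma}$, cylinder functions as $\mathcal{H}$, well-definedness of $\hat{E}$ forced by the compatibility and symmetry conditions, and a routine verification of the nonlinear (resp.\ sublinear) expectation axioms. No gaps to report.
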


Now we consider a new random fields under a sublinear expectation space.

\begin{definition} Let $(W_{\gamma})_{\gamma\in\Gamma}$ be an $m$-dimensional
random field, indexed by $\Gamma$, defined on a sublinear expectation space $(\Omega,\mathcal{H},\hat{E})$.
$(W_{\gamma})_{\gamma\in\Gamma}$ is called a {maximally distributed random field} if for each $\underline{\gamma}=(\gamma_{1},\cdots,\gamma_{n})\in\mathcal{J}_{\Gamma}$,
the following $(n\times m)$-dimensional random vector
\begin{align*}
W_{\underline{\gamma}}=&(W_{\gamma_{1}},\cdots,W_{\gamma_{n}}) \\
=&(W_{\gamma_{1}}^{(1)},\cdots W_{\gamma_{1}}^{(m)},\cdots,W_{\gamma_{n}}^{(1)},\cdots,W_{\gamma_{n}}^{(m)}),\,\,W_{\gamma_{i}}^{(j)}\in\mathcal{H},
\end{align*}
is maximally distributed.
\end{definition}

For each $\underline{\gamma}=(\gamma_{1},\cdots,\gamma_{n})\in\mathcal{J}_{\Gamma}$,
we define
\[
g^{W}_{\underline{\gamma}}(p)=\hat{E}[W_{\underline{\gamma}}\cdot p],\,\,\,\,p\in\mathbb{R}^{n\times m},
\]
Then $(g^{W}_{\underline{\gamma}})_{\underline{\gamma}\in\mathcal{J}_{\Gamma}}$
constitutes a family of {sublinear} functions:
\[
g^{W}_{\underline{\gamma}}:\mathbb{R}^{n\times m}\mapsto\mathbb{R},\,\,\,\,\underline{\gamma}=(\gamma_{1},\cdots,\gamma_{n}),\,\,\gamma_{i}\in\Gamma,\,\,1\leq i\leq n,\,\,n\in\mathbb{N},
\]
which satisfies the properties of consistency in the following sense:
\begin{description}
\item [{{(1)} Compatibility:}] For any $(\gamma_{1},\cdots,\gamma_{n},\gamma_{n+1})\in\mathcal{J}_{\Gamma}$
and $p=(p_{i})_{i=1}^{n\times m}\in\mathbb{R}^{n\times m}$,
\begin{equation}
g^{W}_{\gamma_{1},\cdots,\gamma_{n},\gamma_{n+1}}(\bar{p})=g^{W}_{{\gamma_{1}},\cdots,W_{\gamma_{n}}}(p),\label{G consistent 1}
\end{equation}
where $\bar{p}=\left(\begin{array}{c}
p\\
0
\end{array}\right)\in\mathbb{R}^{(n+1)\times m}$;
\item [{{(2)} Symmetry:}] For any permutation $\pi$ of $\{1,\cdots,n\}$,
\begin{equation}
g^{W}_{\gamma_{\pi(1)},\cdots,\gamma_{\pi(n)}}(p)=g^{W}_{\gamma_{1},\cdots,\gamma_{n}}(\pi^{-1}(p)),\label{G consistent 2}
\end{equation}
where
$\pi^{-1}(p)=(p^{(1)},\ldots,p^{(n)}),$
$$p^{(i)}=(p_{(\pi^{-1}(i)-1)m+1},\ldots,p_{(\pi^{-1}(i)-1)m+m})\ , \ 1\leq i\leq n.$$
\end{description}

If the above
type of family of sublinear functions $(g_{\underline{\gamma}})_{\underline{\gamma}\in\mathcal{J}_{\Gamma}}$
is given, following the construction procedure in the proof of Theorem 3.5 in Ji and Peng \cite{PJ}, we can construct a maximally
distributed random field on sublinear expectation space.

\begin{theorem} \label{existence of G.R.F.} Let $(g_{\underline{\gamma}})_{\underline{\gamma}\in\mathcal{J}_{\Gamma}}$
be a family of real-valued functions such that, for each $\underline{\gamma}=(\gamma_{1},\cdots,\gamma_{n})\in\mathcal{J}_{\Gamma}$,
the real function $g_{\underline{\gamma}}$ is defined on $\mathbb{R}^{n\times m}\mapsto\mathbb{R}$
and satisfies the sub-linearity.
Moreover, this family $(g_{\underline{\gamma}})_{\underline{\gamma}\in\mathcal{J}_{\Gamma}}$
satisfies the compatibility condition \eqref{G consistent 1}
and symmetry condition \eqref{G consistent 2}. Then there exists
an $m$-dimensional  maximally distributed random field $(W_{\gamma})_{\gamma\in\Gamma}$
on a sublinear expectation space $(\Omega,\mathcal{H},\hat{E})$
such that for each $\underline{\gamma}=(\gamma_{1},\cdots,\gamma_{n})\in\mathcal{J}_{\Gamma}$,
$W_{\underline{\gamma}}=(W_{\gamma_{1}},\cdots,W_{\gamma_{n}})$ is
maximally distributed with generating function
\[
g^{W}_{{\underline{\gamma}}}(p)=\hat{E}[W_{\underline{\gamma}}\cdot p]=g_{\underline{\gamma}}(p),\,\,\text{ for any }\,p\in\mathbb{R}^{n\times m}.
\]
Furthermore, if there exists another maximally distributed random
field $({\bar{W}}_{\gamma})_{\gamma\in\Gamma}$, with the same index
set $\Gamma$, defined on a sublinear expectation space $(\bar{\Omega},\bar{\mathcal{H}},\bar{E})$
such that for each $\underline{\gamma}=(\gamma_{1},\cdots,\gamma_{n})\in\mathcal{J}_{\Gamma}$,
$\bar{W}_{\underline{\gamma}}$ is maximally distributed with
the same generating function $g_{\underline{\gamma}}$, namely,
\[
\bar{E}[\bar{W}_{\underline{\gamma}}\cdot p]=g_{\underline{\gamma}}(p)\,\,\text{ for any }\ p\in\mathbb{R}^{n\times m},
\]
then we have $W\overset{d}{=}\bar{W}$. \end{theorem}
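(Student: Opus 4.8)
The plan is to build the required finite-dimensional distributions directly from the data $(g_{\underline\gamma})_{\underline\gamma\in\mathcal{J}_\Gamma}$, to check that they form a consistent family of sublinear maximal distributions, to invoke the Kolmogorov-type existence theorem (Theorem \ref{Kolmogorov existence}) to realize them as a random field, and finally to read off uniqueness from the fact that a maximal distribution is completely determined by its generating function (Proposition \ref{G heat equation}).

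First I would pass from each sublinear function to a maximal distribution. Fix $\underline\gamma=(\gamma_1,\cdots,\gamma_n)$ and write $N=n\times m$. Since $g_{\underline\gamma}:\mathbb{R}^N\to\mathbb{R}$ is finite, positively homogeneous and subadditive, standard convex analysis furnishes a nonempty bounded closed convex set
$$\Lambda_{\underline\gamma}=\{v\in\mathbb{R}^N:\ v\cdot p\leq g_{\underline\gamma}(p)\ \text{for all}\ p\in\mathbb{R}^N\}$$
whose support function is exactly $g_{\underline\gamma}$, i.e. $g_{\underline\gamma}(p)=\max_{v\in\Lambda_{\underline\gamma}}v\cdot p$. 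I then set $\mathbb{F}_{\underline\gamma}[\varphi]:=\max_{v\in\Lambda_{\underline\gamma}}\varphi(v)$. Each $\mathbb{F}_{\underline\gamma}$ is monotone, constant-preserving, subadditive and positively homogeneous, hence a sublinear expectation, and by construction it is the maximal distribution associated with $g_{\underline\gamma}$ in the sense of Proposition \ref{G heat equation}.

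The heart of the argument, and the step I expect to be fussiest, is verifying that the $C_{b.Lip}$-level consistency conditions \eqref{F consistent 1}--\eqref{F consistent 2} follow from the $g$-level conditions \eqref{G consistent 1}--\eqref{G consistent 2}. For compatibility, let $\mathrm{pr}:\mathbb{R}^{(n+1)m}\to\mathbb{R}^{nm}$ be the coordinate projection forgetting the last $m$ entries. The identity $g_{\gamma_1,\cdots,\gamma_{n+1}}(\bar p)=g_{\gamma_1,\cdots,\gamma_n}(p)$ with $\bar p=(p,0)$ reads $\max_{v\in\Lambda_{\gamma_1,\cdots,\gamma_{n+1}}}\mathrm{pr}(v)\cdot p=\max_{w\in\Lambda_{\gamma_1,\cdots,\gamma_n}}w\cdot p$, so the support functions of $\mathrm{pr}(\Lambda_{\gamma_1,\cdots,\gamma_{n+1}})$ and $\Lambda_{\gamma_1,\cdots,\gamma_n}$ agree; since the projection of a compact convex set is again compact convex, this forces $\mathrm{pr}(\Lambda_{\gamma_1,\cdots,\gamma_{n+1}})=\Lambda_{\gamma_1,\cdots,\gamma_n}$, whence $\max_{v\in\Lambda_{\gamma_1,\cdots,\gamma_{n+1}}}\widetilde{\varphi}(v)=\max_{w\in\mathrm{pr}(\Lambda_{\gamma_1,\cdots,\gamma_{n+1}})}\varphi(w)=\max_{w\in\Lambda_{\gamma_1,\cdots,\gamma_n}}\varphi(w)$, which is \eqref{F consistent 1}. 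For symmetry, the block permutation in \eqref{G consistent 2} translates into the statement that $\Lambda_{\gamma_{\pi(1)},\cdots,\gamma_{\pi(n)}}$ is the image of $\Lambda_{\gamma_1,\cdots,\gamma_n}$ under the corresponding permutation of the $m$-blocks of coordinates; matching the two max-representations then yields \eqref{F consistent 2}. The bookkeeping with the index shifts $p^{(i)}$ and the inverse permutation is the only genuinely delicate point, and I would dispose of it by checking that the two support functions agree at every $p$ and invoking uniqueness of the support-function representation of a compact convex set.

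With consistency established, Theorem \ref{Kolmogorov existence} produces a random field $(W_\gamma)_{\gamma\in\Gamma}$ on a sublinear expectation space whose family of finite-dimensional distributions coincides with $\{\mathbb{F}_{\underline\gamma}\}$, and the sublinearity of each $\mathbb{F}_{\underline\gamma}$ guarantees that $\hat E$ is sublinear. Since each $\mathbb{F}_{\underline\gamma}$ is a maximal distribution, each $W_{\underline\gamma}$ is maximally distributed; evaluating on the continuous linear test functions $v\mapsto v\cdot p$ gives $\hat E[W_{\underline\gamma}\cdot p]=\max_{v\in\Lambda_{\underline\gamma}}v\cdot p=g_{\underline\gamma}(p)$, as required. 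For uniqueness, suppose $\bar W$ is another maximally distributed field with the same generating functions. Then for each $\underline\gamma$ both $W_{\underline\gamma}$ and $\bar W_{\underline\gamma}$ are maximally distributed with the common generating function $g_{\underline\gamma}$; by Proposition \ref{G heat equation} the generating function determines the maximal distribution uniquely, since its compact convex set $\Lambda_{\underline\gamma}$ is recovered as the subdifferential of $g_{\underline\gamma}$ at the origin. Hence $\mathbb{F}^{W}_{\underline\gamma}=\mathbb{F}^{\bar W}_{\underline\gamma}$ for every $\underline\gamma\in\mathcal{J}_\Gamma$, which is precisely $W\overset{d}{=}\bar W$.
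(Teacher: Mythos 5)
Your proposal is correct and follows essentially the same route as the paper, which itself only sketches the argument by deferring to the construction procedure of Theorem 3.5 in Ji and Peng \cite{PJ}: one turns each sublinear $g_{\underline{\gamma}}$ into the maximal distribution $\varphi\mapsto\max_{v\in\Lambda_{\underline{\gamma}}}\varphi(v)$ via the support-function correspondence, checks that the $g$-level compatibility and symmetry conditions \eqref{G consistent 1}--\eqref{G consistent 2} pass to the finite-dimensional distributions \eqref{F consistent 1}--\eqref{F consistent 2}, applies Theorem \ref{Kolmogorov existence}, and gets uniqueness in law because a compact convex set is determined by its support function. Your explicit verification of the projection and permutation identities for the sets $\Lambda_{\underline{\gamma}}$ fills in precisely the steps the paper leaves to the cited reference.
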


\section{Maximally distributed white noise}

In this section, we formulate a new type of maximally distributed
white noise on $\mathbb{R}^d$.

Given sublinear expectation space $\Omega,\mathcal{H},\hat{E}$, let $\mathbb{L}^{p}(\Omega)$ be the completion
of $\mathcal{H}$ under the Banach norm $\|X\|:=\hat{E}[|X|^p]^{\frac{1}{p}}$.
For any $X,Y\in\mathbb{L}^{1}(\Omega)$,
we say that $X=Y$ if $\hat{E}[|X-Y|]=0$.
As shown in Chapter 1 of Peng \cite{P2010},
$\hat{E}$ can be continuously extended to the mapping from
$\mathbb{L}^{1}(\Omega)$ to $\mathbb{R}$ and properties (i)-(iv)
of Definition \ref{sublinear expectation} still hold. Moreover, $(\Omega,\mathbb{L}^{1}(\Omega),\hat{E})$
also forms a sublinear expectation space, which is called the complete
sublinear expectation space.

\begin{definition} \label{Gwhitenoise} Let $(\Omega,\mathbb{L}^{1}(\Omega),\hat{E})$
be a complete sublinear expectation space and $\Gamma=\mathcal{B}_{0}(\mathbb{R}^{d}):=\{A\in\mathcal{B}(\mathbb{R}^{d}),\lambda_{A}<\infty\}$,
where $\lambda_{A}$ denotes the Lebesgue measure of $A\in\mathcal{B}(\mathbb{R}^{d})$.
Let $g:\mathbb{R}\mapsto\mathbb{R}$ be a given sublinear function, i.e.,
$$g(p)=\overline{\mu}p^+-\underline{\mu}p^-, \ \ -\infty<\underline{\mu}\leq\overline{\mu}<+\infty.$$
A  random field $W=\{W_{A}\}_{A\in\Gamma}$
is called a one-dimensional {maximally distributed white noise}
if
\begin{description}
\item [{(i)}] For each $A_{1},\cdots,A_{n}\in\Gamma$, $(W_{A_{1}},\cdots,W_{A_{n}})$
is a $\mathbb{R}^{n}$-maximally distributed random vector under $\hat{E}$, and for each $A\in\Gamma$,
\begin{align}
\hat{E}[W_{A}\cdot p]=g(p)\lambda_{A},\quad p\in\mathbb{R}.\label{Gwn1}
\end{align}

\item [{(ii)}] Let $A_{1},A_{2}, \cdots, A_n$ be in $\Gamma$ and mutually disjoint, then  $\{W_{A_i}\}_{i=1}^n$ are independent sequence, and
\begin{align}
 & W_{A_{1}\cup A_{2}\cup\cdots\cup A_n}=W_{A_{1}}+W_{A_{2}}+{\cdots+W_{A_n}}. \label{Gwn2}
\end{align}
\end{description}
\end{definition}
{\begin{remark}\label{rem1}
For each $A\in\Gamma$, we can restrict that $W_A$ takes values in $[\lambda_A\underline{\mu},\lambda_A\overline{\mu}]$. Indeed, let
$$d_A(x):=\min_{y\in[\lambda_A\underline{\mu},\lambda_A\overline{\mu}]}\{|x-y|\},$$
by the definition of maximal distribution,
$$\hat{E}[d_A(W_A)]=\max_{v\in[\lambda_A\underline{\mu},\lambda_A\overline{\mu}]}\min_{y\in[\lambda_A\underline{\mu},\lambda_A\overline{\mu}]}\{|v-y|\}=0,$$
which implies that $d_A(W_A)=0$.
\end{remark}
}

We can construct a spatial maximal white noise satisfying Definition \ref{Gwhitenoise} in the following way.

For each $\underline{\gamma}=(A_1,\cdots, A_n)\in \mathcal{J}_\Gamma$, $\Gamma=\mathcal{B}_0(\mathbb{R}^d),$ consider the mapping $g_{\underline{\gamma}}(\cdot):\mathbb{R}^n\rightarrow\mathbb{R}$ defined as follows:

\begin{align}\label{eqg}
g_{\underline{\gamma}}(p){:=}\sum_{k\in\{0,1\}^n}g(k\cdot p)\lambda_{B(k)}, \ \ \ p\in\mathbb{R}^n,
\end{align}
where $k=(k_1,\cdots,k_n)\in\{0,1\}^n$, and  $B(k)=\cap_{j=1}^n B_j$, with
\[
B_{j}=\left\{\begin{array}{cc}
A_{j}\,\,\, & \text{if }k_{j}=1,\\
A_{j}^{c} & \text{if }\;k_{j}=0.
\end{array}\right.
\]

For example, given $A_{1},A_{2},A_{3}\in\Gamma$ and $p=(p_{1},p_{2},p_{3})\in\mathbb{R}^{3}$,
\begin{align*}
&g_{A_{1},A_{2},A_{3}}(p)=  g(p_{1}+p_{2}+p_{3})\lambda_{A_{1}\cap A_{2}\cap A_{3}}\\
 & +g(p_{1}+p_{2})\lambda_{A_{1}\cap A_{2}\cap A_{3}^{c}}+g(p_{2}+p_{3})\lambda_{A_{1}^{c}\cap A_{2}\cap A_{3}}+g(p_{1}+p_{3})\lambda_{A_{1}\cap A_{2}^{c}\cap A_{3}}\\
 & +g(p_{1})\lambda_{A_{1}\cap A_{2}^{c}\cap A_{3}^{c}}+g(p_{2})\lambda_{A_{1}^{c}\cap A_{2}\cap A_{3}^{c}}+g(p_{3})\lambda_{A_{1}^{c}\cap A_{2}^{c}\cap A_{3}}.
\end{align*}

Obviously, for each $\underline{\gamma}=(A_1,\cdots, A_n)\subset\Gamma$, $g_{\underline{\gamma}}(\cdot)$ defined by (\ref{eqg}) is a sublinear function defined on $\mathbb{R}^n$ due to the sub-linearity of function $g(\cdot)$. The following property shows that the consistency conditions (\ref{G consistent 1}) and (\ref{G consistent 2}) also hold for $\{g_{\underline{\gamma}}\}_{\gamma\in\mathcal{J}_\Gamma}$.

\begin{proposition}\label{G-w.n.}
The family $\{g_{\underline{\gamma}}\}_{\gamma\in\mathcal{J}_\Gamma}$ defined by (\ref{eqg}) satisfies the consistency conditions (\ref{G consistent 1}) and (\ref{G consistent 2}).
\end{proposition}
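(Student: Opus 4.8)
The plan is to verify the two consistency conditions directly from the explicit formula \eqref{eqg}, treating each as a re-indexing of the defining sum over $k\in\{0,1\}^n$. Throughout I will rely on two elementary facts. First, $g(0)=0$, which is immediate from $g(p)=\overline{\mu}p^+-\underline{\mu}p^-$; this guarantees that the term indexed by $k=(0,\ldots,0)$ always vanishes, so that every surviving term carries at least one factor $A_j$ and hence is attached to a set of finite Lebesgue measure. Second, Lebesgue measure is additive over disjoint decompositions. These dispose of any finiteness worry at the outset.

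For the compatibility condition \eqref{G consistent 1}, I would substitute $\bar{p}=(p_1,\ldots,p_n,0)$ into $g_{A_1,\ldots,A_{n+1}}$. Since the last coordinate of $\bar{p}$ is zero, the inner product $k\cdot\bar{p}=\sum_{j=1}^n k_jp_j$ does not depend on $k_{n+1}$, so the sum over $k\in\{0,1\}^{n+1}$ regroups as a sum over $k'=(k_1,\ldots,k_n)\in\{0,1\}^n$ in which the two subterms $k_{n+1}=1$ and $k_{n+1}=0$ are collected together. Writing $B'(k')$ for the set associated to $k'$ in the $n$-index family, the grouped Lebesgue factor becomes $\lambda_{B'(k')\cap A_{n+1}}+\lambda_{B'(k')\cap A_{n+1}^c}=\lambda_{B'(k')}$, using that $A_{n+1}$ and $A_{n+1}^c$ partition $\mathbb{R}^d$. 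This collapses the $(n+1)$-index sum exactly onto $g_{A_1,\ldots,A_n}(p)$.

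For the symmetry condition \eqref{G consistent 2}, write $\sigma=\pi$ and set up the bijection $k\mapsto l$ on $\{0,1\}^n$ defined by $l_i=k_{\sigma^{-1}(i)}$ (equivalently $k_j=l_{\sigma(j)}$). Under this correspondence the atom $C(k)=\bigcap_j C_j$ appearing in $g_{A_{\sigma(1)},\ldots,A_{\sigma(n)}}(p)$, where $C_j$ equals $A_{\sigma(j)}$ or its complement according to $k_j$, coincides with the atom $B(l)$ appearing in $g_{A_1,\ldots,A_n}(\pi^{-1}(p))$: indeed $A_i$ is taken uncomplemented in $C(k)$ precisely when $k_{\sigma^{-1}(i)}=1$, i.e. when $l_i=1$. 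It then remains only to match the arguments of $g$. Using $(\pi^{-1}(p))_i=p_{\sigma^{-1}(i)}$ (recall $m=1$ here), one computes $l\cdot\pi^{-1}(p)=\sum_i k_{\sigma^{-1}(i)}p_{\sigma^{-1}(i)}=\sum_j k_jp_j=k\cdot p$ after the substitution $j=\sigma^{-1}(i)$. Since the bijection preserves both the Lebesgue factor and the value of $g$, the two sums agree term by term.

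The two identities invoked---measure additivity over $A_{n+1}\sqcup A_{n+1}^c$ and the reindexing of the dot product---are routine. The one place that genuinely demands care is the bookkeeping in the symmetry step: one must pin down that the permutation acts on the coordinates of $p$ exactly as encoded in the definition of $\pi^{-1}(p)$, and confirm that $l_i=k_{\pi^{-1}(i)}$ is the correct matching so that the chosen atoms $B(l)=C(k)$ and the linear forms align simultaneously. Getting this index convention right, rather than its inverse, is the only real obstacle; once the bijection is correctly identified the proof closes immediately.
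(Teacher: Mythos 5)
Your proof is correct and follows essentially the same route as the paper: for compatibility, regroup the sum over $k_{n+1}\in\{0,1\}$ and use additivity of the Lebesgue measure, $\lambda_{B(k')\cap A_{n+1}}+\lambda_{B(k')\cap A_{n+1}^c}=\lambda_{B(k')}$; for symmetry, reindex the sum by the permutation. You actually supply two details the paper leaves implicit --- the observation that $g(0)=0$ kills the $k=(0,\dots,0)$ term (so no infinite-measure atom causes trouble), and the explicit bijection $l_i=k_{\pi^{-1}(i)}$ matching atoms and linear forms, which the paper dismisses as ``easily verified.''
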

\begin{proof}
For compatibility (\ref{G consistent 1}), given $A_1,\cdots, A_n, A_{n+1}\in \Gamma$ and  $\bar{p}^T=(p^T,0)\in\mathbb{R}^{n+1}$, we have
\begin{align*}
g_{A_1,\cdots,A_{n+1}}(\bar{p})&=\sum_{k\in\{0,1\}^{n+1}}g(k\cdot\bar{p})\lambda_{B(k)}\\
&=\sum_{k'\in\{0,1\}^n}g(k'\cdot p)(\lambda_{B(k')\cap A_{n+1}}+\lambda_{B(k')\cap A_{n+1}^c})\\
&=\sum_{k'\in\{0,1\}^n}g(k'\cdot p)\lambda_{B(k')}=g_{A_1,\cdots,A_n}(p).
\end{align*}
The symmetry (\ref{G consistent 2}) can be easily verified since the operators $k\cdot p$ and $B(k)=\cap_{j=1}^nB_j$ are also symmetry.
\end{proof}

Now we present the existence of the maximally distributed white noises under the sublinear expectation.

\begin{theorem} \label{existence of G.W.N.} For each given sublinear function
\[
g(p)=\max_{\mu\in[\underline{\mu},\overline{\mu}]}(\mu\cdot p)=\overline{\mu}p^+-\underline{\mu}p^-,\ p\in\mathbb{R},
\]
there exists a one-dimensional maximally distributed random
field $(W_{\gamma})_{\gamma\in\Gamma}$ on a sublinear expectation
space $(\Omega,\mathbb{L}^1(\Omega),\hat{E})$ such that, for
each $\underline{\gamma}=(A_{1},\cdots,A_{n})\in\mathcal{J}_{\Gamma}$,
$W_{\underline{\gamma}}=(W_{A_{1}},\cdots,W_{A_{n}})$ is maximally
distributed.

Furthermore, $(W_{\gamma})_{\gamma\in\Gamma}$ is a spatial maximally distributed white
noise under $(\Omega,\mathbb{L}^{1}(\Omega),\hat{E})$, namely, conditions (i)
and (ii) of Definition~\ref{Gwhitenoise} are satisfied.

If $(\bar{W}_{\gamma})_{\gamma\in\Gamma}$ is another maximally distributed white noise
with the same sublinear function $g$ in \eqref{eqg}, then ${\bar{W}}\overset{d}{=}{W}$.\end{theorem}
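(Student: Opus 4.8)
The plan is to reduce everything to the general existence machinery of Theorem~\ref{existence of G.R.F.} together with the structural results on maximal distributions already established. First I would invoke Proposition~\ref{G-w.n.}, which shows that the family $\{g_{\underline{\gamma}}\}_{\underline{\gamma}\in\mathcal{J}_\Gamma}$ defined by \eqref{eqg} consists of sublinear functions satisfying the compatibility condition \eqref{G consistent 1} and the symmetry condition \eqref{G consistent 2}. Applying Theorem~\ref{existence of G.R.F.} with $m=1$ to this family immediately yields an existence of a one-dimensional maximally distributed random field $(W_\gamma)_{\gamma\in\Gamma}$ on some sublinear expectation space $(\Omega,\mathbb{L}^1(\Omega),\hat{E})$ such that each $W_{\underline{\gamma}}=(W_{A_1},\ldots,W_{A_n})$ is maximally distributed with generating function $g^W_{\underline{\gamma}}(p)=\hat{E}[W_{\underline{\gamma}}\cdot p]=g_{\underline{\gamma}}(p)$. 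The uniqueness clause (that $\bar W\overset{d}{=}W$ for any other white noise with the same $g$) will also follow from the uniqueness part of Theorem~\ref{existence of G.R.F.}, once I verify that any white noise in the sense of Definition~\ref{Gwhitenoise} necessarily has generating functions $g_{\underline{\gamma}}$ given by \eqref{eqg}; this last point is really the content of establishing property (ii) below.

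The substantive work is to verify that the field produced this way actually satisfies conditions (i) and (ii) of Definition~\ref{Gwhitenoise}. For (i), taking a single index $\underline{\gamma}=(A)$ in \eqref{eqg} gives $g_{A}(p)=g(p)\lambda_{A\cap A}+g(0)\lambda_{A^c}=g(p)\lambda_A$, using $g(0)=0$, which is exactly \eqref{Gwn1}; combined with the maximal distribution of each $W_{\underline{\gamma}}$ this gives (i). For (ii), suppose $A_1,\ldots,A_n$ are mutually disjoint. Then for $k\in\{0,1\}^n$ the set $B(k)=\cap_j B_j$ is empty (hence $\lambda_{B(k)}=0$) whenever $k$ has two or more nonzero coordinates, because disjointness forces $A_i\cap A_j=\varnothing$ for $i\neq j$. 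Thus only the $k=0$ term and the $n$ unit vectors $e_i$ survive, and \eqref{eqg} collapses to $g_{\underline{\gamma}}(p)=\sum_{i=1}^n g(p_i)\lambda_{A_i}=\sum_{i=1}^n g(p_i e_i\cdot p)$ — more precisely $g_{\underline{\gamma}}(p)=\sum_{i=1}^n \hat{E}[W_{A_i}p_i]$, since $g(p_i)\lambda_{A_i}=\hat{E}[W_{A_i}\cdot p_i]$ by \eqref{Gwn1}. This is precisely the hypothesis of Proposition~\ref{p1}, so $\{W_{A_i}\}_{i=1}^n$ is an independent sequence, and by the symmetry clause of Proposition~\ref{p1} the independence is order-free.

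To obtain the finite-additivity relation \eqref{Gwn2} for disjoint sets, I would compute the generating function of the pair $(W_{A_1\cup\cdots\cup A_n},\,W_{A_1}+\cdots+W_{A_n})$, or more directly show these two random variables are identically distributed by comparing their distributions against an arbitrary test function. The clean route is to show $\hat{E}\bigl[\bigl|W_{A_1\cup\cdots\cup A_n}-(W_{A_1}+\cdots+W_{A_n})\bigr|\bigr]=0$, which in the completed space $\mathbb{L}^1(\Omega)$ means equality. Since all the relevant variables are jointly maximally distributed, I can evaluate this expectation as a maximum over the product of intervals determined by the generating function; the additivity of Lebesgue measure $\lambda_{A_1\cup\cdots\cup A_n}=\sum_i\lambda_{A_i}$ on the disjoint union makes the supporting set of $W_{A_1\cup\cdots\cup A_n}$ coincide with the Minkowski sum of the supporting intervals of the $W_{A_i}$, forcing the two sides to agree.

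The main obstacle I anticipate is the careful bookkeeping in verifying \eqref{Gwn2} rather than its idea: one must confirm that the joint maximal distribution of $\bigl(W_{A_1},\ldots,W_{A_n},W_{A_1\cup\cdots\cup A_n}\bigr)$ — whose generating function is read off from \eqref{eqg} applied to the enlarged index tuple — has its convex support set $\Lambda$ concentrated exactly on the hyperplane $\{v_{n+1}=v_1+\cdots+v_n\}$. This is where the specific form of \eqref{eqg}, together with $\lambda_{B(k)}=0$ for overlapping disjoint sets, does the work, but matching the generating function on $\mathbb{R}^{n+1}$ to the linear constraint requires evaluating $g_{\underline{\gamma}}$ on directions $p$ that pick out the difference $v_{n+1}-(v_1+\cdots+v_n)$ and checking it vanishes. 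Once this containment of $\Lambda$ in the hyperplane is confirmed, $\hat{E}[|W_{A_1\cup\cdots\cup A_n}-\sum_i W_{A_i}|]=\max_{v\in\Lambda}|v_{n+1}-\sum_i v_i|=0$ follows at once, completing the verification of Definition~\ref{Gwhitenoise} and hence the theorem.
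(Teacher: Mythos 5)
Your proposal is correct and follows essentially the same route as the paper's proof: existence and uniqueness via Proposition~\ref{G-w.n.} and Theorem~\ref{existence of G.R.F.}, property (i) by evaluating \eqref{eqg} at a single index, independence via the disjointness collapse of \eqref{eqg} combined with Proposition~\ref{p1}, and additivity \eqref{Gwn2} by showing the joint maximal distribution is supported on the hyperplane $\{v_{n+1}=v_1+\cdots+v_n\}$ and testing against $\varphi(v)=|v_{n+1}-\sum_i v_i|$ (the paper carries out this last computation explicitly for two disjoint sets). The only cosmetic difference is that you flag the need to check that any white noise satisfying Definition~\ref{Gwhitenoise} has generating functions of the form \eqref{eqg} before invoking uniqueness, a point the paper's proof passes over silently.
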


\begin{proof} Thanks to Proposition \ref{G-w.n.}
and Theorem \ref{existence of G.R.F.},  the existence and uniqueness
of the maximally distributed random field $W$ in a sublinear expectation
space $(\Omega,\mathbb{L}^1(\Omega),\hat{E})$ with the family of
generating functions defined by \eqref{eqg} hold. We only need to verify that the maximally distributed random field $W$ satisfies conditions
(i) and (ii) of Definition~\ref{Gwhitenoise}.

For each $A\in\Gamma$, $\hat{E}[W_A\cdot p]=g(p)\lambda_A$ by Theorem \ref{existence of G.R.F.} and \eqref{eqg}, thus (i) of Definition~\ref{Gwhitenoise} holds.

We note that if $\{A_i\}_{i=1}^n$ are mutually disjoint, then for $p=(p_1,\cdots,p_n)\in\mathbb{R}^n$, by \eqref{eqg}, we have
$$\hat{E}[p_1W_{A_1}+\cdots+p_n W_{A_n}]=g(p_1)\lambda_{A_1}+\cdots+g(p_n)\lambda_{A_n},$$
thus the independence of $\{W_{A_i}\}_{i=1}^n$ can be implied by Proposition \ref{p1}.

In order to prove \eqref{Gwn2}, we only consider the case of two disjoint sets. Suppose that
\begin{equation*}
A_{1}\cap A_{2}=\emptyset,\quad A_{3}=A_{1}\cup A_{2},\label{eq:a12}
\end{equation*}
an easy computation of \eqref{eqg} shows that
\begin{align*}
g_{A_{1},A_{2},A_{3}}(p)=&g(p_{1}+p_{3})\lambda_{A_{1}}+g(p_{2}+p_{3})\lambda_{A_{2}}\\
=&\max_{v_{1}\in[\underline{{\mu}}\lambda_{A_{1}},\overline{\mu}\lambda_{A_{1}}]}\max_{v_{2}\in[\underline{{\mu}}\lambda_{A_{2}},\overline{\mu}\lambda_{A_{2}}]}\max_{v_{3}=v_{1}+v_{2}}(p_{1}\cdot v_{1}+p_{2}\cdot v_{2}+p_{3}\cdot v_{3}).
\end{align*}

Thus, for each $\varphi\in C(\mathbb{R}^{3})$,
\[
\hat{E}[\varphi(W_{A_{1}},W_{A_{2}},W_{A_{3}})]=\max_{v_{1}\in[\underline{{\mu}}\lambda_{A_{1}},\overline{\mu}\lambda_{A_{1}}]}\max_{v_{2}\in[\underline{{\mu}}\lambda_{A_{2}},\overline{\mu}\lambda_{A_{2}}]}\max_{v_{3}=v_{1}+v_{2}}\varphi(v_{1},v_{2},v_{3}).
\]
In particular, we set $\varphi(v_{1},v_{2},v_{3})=|v_{1}+v_{2}-v_{3}|$,
it follows that
\[
\hat{E}[|W_{A_{1}}+W_{A_{2}}-W_{A_{1}\cup A_{2}}|]=0.
\]
which implies that
\[
W_{A_{1}\cup A_{2}}=W_{A_{1}}+W_{A_{2}}.
\]

Finally, (ii) of Definition~\ref{Gwhitenoise} holds.
\end{proof}
{\begin{remark}
The finite-dimensional distribution of maximally distributed whiten noise can be uniquely determined by two parameters $\overline{\mu}$ and $\underline{\mu}$, which can be simply calculated by taking the maximum of the continuous function over the domain determined by $\overline{\mu}$ and $\underline{\mu}$.
\end{remark}}
Similar to the invariant property of $G$-Gaussian white noise introduced in Ji and Peng \cite{PJ}, it also holds for maximally distributed white noise due to the well-known invariance of the Lebesgue measure under rotation and translation.

\begin{proposition}
For each $p\in\mathbb{R}^d$ and $O\in\mathbb{O}(d):=\{O\in\mathbb{R}^{d\times d}:O^T=O^{-1}\}$, we set
$$T_{p,O}(A)=O\cdot A+p,\ \ \ A\in\Gamma.$$
Then, for each $A_1,\cdots, A_n\in\Gamma$,
$$(W_{A_1},\cdots,W_{A_n})\overset{d}{=}(W_{T_{p,O}(A_1)},\cdots,W_{T_{p,O}(A_n)}).$$
\end{proposition}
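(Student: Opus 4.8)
The plan is to reduce the distributional identity to an equality of generating functions and then exploit the rigid-motion invariance of Lebesgue measure. Since both $(W_{A_1},\dots,W_{A_n})$ and $(W_{T_{p,O}(A_1)},\dots,W_{T_{p,O}(A_n)})$ are maximally distributed, and a maximally distributed random vector is completely determined by its generating function $q\mapsto\hat{E}[\,W\cdot q\,]$ (this is the uniqueness encoded in Proposition~\ref{G heat equation}, where each sublinear $g$ on $\mathbb{R}^n$ determines a unique maximal distribution, and it is precisely the criterion used for the uniqueness assertion in Theorem~\ref{existence of G.R.F.}), it suffices to prove that for every $q=(q_1,\dots,q_n)\in\mathbb{R}^n$,
\[
g^{W}_{T_{p,O}(A_1),\dots,T_{p,O}(A_n)}(q)=g^{W}_{A_1,\dots,A_n}(q).
\]
Both sides are given by formula~\eqref{eqg}, so the whole problem comes down to comparing the Lebesgue measures $\lambda_{B(k)}$ of the $2^n$ block sets attached to the two families.

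Next I would record the two elementary facts about the map $T_{p,O}(\cdot)=O\cdot(\cdot)+p$. First, because $O\in\mathbb{O}(d)$ is invertible, $T_{p,O}$ is a bijection of $\mathbb{R}^d$, and therefore commutes with all Boolean operations: $T_{p,O}(A^c)=\big(T_{p,O}(A)\big)^c$ and $T_{p,O}\big(\bigcap_{j}B_j\big)=\bigcap_{j}T_{p,O}(B_j)$. Second, $T_{p,O}$ is an isometry, so it preserves Lebesgue measure, $\lambda_{T_{p,O}(C)}=\lambda_{C}$, and in particular maps $\Gamma=\mathcal{B}_0(\mathbb{R}^d)$ into itself, so that the transformed sets are legitimate indices. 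Writing $A_j'=T_{p,O}(A_j)$ and letting $B'(k)=\bigcap_{j=1}^n B_j'$ (with $B_j'=A_j'$ if $k_j=1$ and $B_j'=(A_j')^c$ if $k_j=0$) denote the block sets for the transformed family, the commutation with complement and intersection yields $B'(k)=T_{p,O}(B(k))$ for every $k\in\{0,1\}^n$.

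Finally I would combine these observations. By measure invariance, $\lambda_{B'(k)}=\lambda_{T_{p,O}(B(k))}=\lambda_{B(k)}$ for each $k$, while the factor $g(k\cdot q)$ in~\eqref{eqg} depends only on $k$ and $q$ and not on the underlying sets. Hence the two sums in~\eqref{eqg} coincide term by term, establishing the desired equality of generating functions and thus the claimed identical distribution. I do not expect a genuine obstacle here: the argument is essentially bookkeeping, and the only point deserving care is the commutation of $T_{p,O}$ with complementation, which is exactly where the invertibility of $O$ (equivalently, $T_{p,O}$ being a bijection of all of $\mathbb{R}^d$, not merely injective) enters.
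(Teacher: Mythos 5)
Your proof is correct and follows exactly the route the paper intends: the paper states this proposition without a detailed argument, justifying it by the invariance of Lebesgue measure under rotation and translation, and your write-up simply fleshes this out by reducing to the generating functions of formula~\eqref{eqg} (legitimate, since a maximal distribution is uniquely determined by its sublinear generating function) and checking $\lambda_{B'(k)}=\lambda_{T_{p,O}(B(k))}=\lambda_{B(k)}$ term by term. The only cosmetic remark is that the $k=(0,\dots,0)$ block has infinite measure, but its term vanishes on both sides since $g(0)=0$, so your term-by-term comparison stands.
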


\section{Spatial and temporal maximally distributed white noise and related stochastic integral }

{
In Ji and Peng \cite{PJ}, we see that a spatial $G$-white noise is essentially different from the temporal case or  the temporal-spatial case, since there is no independence property for the spatial $G$-white noise. But for the maximally distributed white noise, spatial or temporal-spatial maximally distributed white noise has the independence property due to the symmetrical independence for maximal distribution.

Combining symmetrical independence and boundedness properties of maximal distribution, the
integrand random fields can be largely extended when we consider the stochastic integral with respect to spatial maximally distributed white noise. For stochastic integral with respect to temporal-spatial case, the integrand random fields can even contain
the ``non-adapted'' situation.}

\subsection{Stochastic integral with respect to the spatial maximally distributed white noise }

{We firstly define the stochastic integral with respect to the spatial maximally distributed white noise in a quite direct way. }

Let $\{{W}_{\gamma}\}_{\gamma\in\Gamma}$, $\Gamma=\mathcal{B}_{0}(\mathbb{R}^{d})$,
be a one-dimensional maximally distributed white noise defined on a complete sublinear
expectation space $({\Omega},\mathbb{L}^{1}(\Omega),\hat{E})$, with
$g(p)=\overline{\mu}p^{+}-\underline{\mu}p^{-}$, $-\infty<\underline{\mu}\leq\overline{\mu}<\infty.$
We introduce the following type of random fields, called simple random
fields.

Given $p\geq 1$, set
\begin{align*}
M_{g}^{p,0}(\Omega)=\lbrace\eta(x,\omega) & =\sum_{i=1}^{n}\xi_{i}(\omega)1_{A_{i}}(x),\;A_{1},\cdots,A_{n}\in\Gamma\text{ are mutually disjoint}\\
& i=1,2,\ldots,n,\ \ \xi_{1},\cdots,\xi_{n}\in \mathbb{L}^{p}(\Omega),\quad n=1,2,\cdots,\rbrace.
\end{align*}
For each simple random fields $\eta\in M_g^{p,0}(\Omega)$ of the form
\begin{equation}
\eta(x,\omega)=\sum_{i=1}^{n}\xi_{i}(\omega){\textbf{1}}_{A_{i}}(x),\label{Simple}
\end{equation}
 the related Bohner's integral for $\eta$ with respect to the Lebesgue measure $\lambda$ is
\[
I_B(\eta)=\int_{\mathbb{R}^{d}}\eta(x,\omega)\lambda(dx):=\sum_{i=1}^{n}\xi_{i}(\omega)\lambda_{A_{i}}.
\]
It is immediate that $I_B(\eta): M^{p,0}_g(\Omega)\mapsto \mathbb{L}^{p}(\Omega)$ is a linear and continuous mapping under
the norm for $\eta$, defined by,
 \[
\|\eta\|_{M^{p}}=\hat{E}[\int_{\mathbb{R}^{d}}|\eta({x},\omega)|^p\lambda(dx)]^{\frac{1}{p}}.
\]
The completion of $M_{g}^{p,0}(\Omega)$ under this norm is denoted by $M_{g}^{p}(\Omega)$ which is a Banach space.  The unique extension of the mapping $I_B$ is denoted by
\[
\int_{\mathbb{R}^{d}}\eta(x,\omega)\lambda(dx):=I_B(\eta),\,\, \eta\in M_{g}^{p}(\Omega).
\]

Now for a simple random field $\eta\in M^{p,0}_g(\Omega)$ of form (\ref{Simple}),
we  define its stochastic integral with respect to $W$ as
\[
I_W(\eta):=\int_{\mathbb{R}^{d}}\eta(x,\omega)W(dx)=\sum_{i=1}^{n}\xi_{i}(\omega)W_{A_{i}}.
\]
With this formulation, we have the following estimation.
\begin{lemma}\label{lemma2} For each $\eta\in M^{1,0}_g(\Omega)$ of form (\ref{Simple}), we have
\begin{equation}
\hat{E}\left[\left|\int_{\mathbb{R}^d}\eta(x,\omega)W(dx)\right|\right]\leq
\kappa\hat{E}\left[\int_{\mathbb{R}^d}|\eta(x,\omega)|\lambda(dx) \right]\label{ineq}
\end{equation}
where $\kappa=\max\{|\underline{\mu}|, |\overline{\mu}|\}$.

\end{lemma}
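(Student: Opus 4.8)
The plan is to reduce the estimate to the pointwise boundedness of the white-noise increments recorded in Remark~\ref{rem1}, after which only the triangle inequality and the monotonicity and positive homogeneity of $\hat{E}$ are needed. In particular, no independence or adaptedness relating the coefficients $\xi_i$ to the increments $W_{A_i}$ will be required, which is precisely the feature this section wishes to highlight: for maximally distributed white noise the increments are uniformly bounded, so the crude termwise bound already suffices.

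First I would rewrite both sides explicitly. Since $A_{1},\dots,A_{n}$ are mutually disjoint, the simple field satisfies $|\eta(x,\omega)|=\sum_{i=1}^{n}|\xi_i(\omega)|\mathbf{1}_{A_i}(x)$, so the right-hand Bochner integral is $\int_{\mathbb{R}^d}|\eta(x,\omega)|\lambda(dx)=\sum_{i=1}^{n}|\xi_i(\omega)|\lambda_{A_i}$, while by definition $\int_{\mathbb{R}^d}\eta(x,\omega)W(dx)=\sum_{i=1}^{n}\xi_i(\omega)W_{A_i}$. The key input is then Remark~\ref{rem1}: for each $A\in\Gamma$ one may take $W_A$ valued in $[\underline{\mu}\lambda_A,\overline{\mu}\lambda_A]$, equivalently $d_A(W_A)=0$, whence $|W_{A_i}|\le\kappa\lambda_{A_i}$ with $\kappa=\max\{|\underline{\mu}|,|\overline{\mu}|\}$. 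This boundedness has two consequences. On the one hand each product $\xi_iW_{A_i}$ lies in $\mathbb{L}^{1}(\Omega)$, since $\hat{E}[|\xi_iW_{A_i}|]\le\kappa\lambda_{A_i}\hat{E}[|\xi_i|]<\infty$, so $I_W(\eta)$ is a well-defined element of $\mathbb{L}^{1}(\Omega)$. On the other hand, applying the triangle inequality termwise gives
\[
\left|\sum_{i=1}^{n}\xi_iW_{A_i}\right|\le\sum_{i=1}^{n}|\xi_i|\,|W_{A_i}|\le\kappa\sum_{i=1}^{n}|\xi_i|\lambda_{A_i}.
\]
Taking $\hat{E}$ of both ends and using monotonicity together with positive homogeneity ($\kappa\ge0$) yields $\hat{E}[|\sum_i\xi_iW_{A_i}|]\le\kappa\,\hat{E}[\sum_i|\xi_i|\lambda_{A_i}]$, which is exactly \eqref{ineq} in view of the first identities.

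The only delicate point is to justify that the inequality $|W_{A_i}|\le\kappa\lambda_{A_i}$ may legitimately be used inside $\hat{E}$. I would handle this through the convention that $X=Y$ whenever $\hat{E}[|X-Y|]=0$: since $\hat{E}[d_{A_i}(W_{A_i})]=0$, the increment $W_{A_i}$ is identified in $\mathbb{L}^{1}(\Omega)$ with its projection onto $[\underline{\mu}\lambda_{A_i},\overline{\mu}\lambda_{A_i}]$, so the termwise comparison above is an inequality between elements of $\mathbb{L}^{1}(\Omega)$ and the monotonicity of the extended $\hat{E}$ applies directly. If one prefers to bypass this identification, one can first prove the estimate for $\xi_i\in\mathcal{H}$ (or bounded $\xi_i$), where every manipulation is elementary, and then pass to general $\xi_i\in\mathbb{L}^{1}(\Omega)$ by the $\mathbb{L}^{1}$-continuity of both sides of \eqref{ineq} in $\eta$. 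This continuity-and-density step is the part I expect to demand the most care in writing out, although it is routine rather than conceptually hard.
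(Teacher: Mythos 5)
Your proof is correct and follows essentially the same route as the paper's own argument: expand the integral as $\sum_i \xi_i W_{A_i}$, apply the termwise triangle inequality, invoke the boundedness $|W_{A_i}|\le\kappa\lambda_{A_i}$ from Remark~\ref{rem1}, and conclude by monotonicity and positive homogeneity of $\hat{E}$. The only difference is that you make explicit the $\mathbb{L}^{1}$-identification of $W_{A_i}$ with its projection onto $[\underline{\mu}\lambda_{A_i},\overline{\mu}\lambda_{A_i}]$ (and the resulting well-definedness of $I_W(\eta)$), a point the paper leaves implicit by citing Remark~\ref{rem1} directly.
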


\begin{proof}
We have
\begin{align*}
\hat{E}[|\int_{\mathbb{R}^{d}}\eta(x,\omega)W(dx)|] & =\hat{E}[|\sum_{i=1}^{N}\xi_{i}(\omega)W_{A_{i}}|]\leq\hat{E}[\sum_{i=1}^{N}|\xi_{i}(\omega)|\cdot|W_{A_{i}}|]\\
 & \leq \kappa\hat{E}[\sum_{i=1}^{N}|\xi_{i}(\omega)|\cdot\lambda_{A_{i}}]=\kappa\hat{E}[\|\eta\|_{M_{g}^{1}(\Omega)}].
\end{align*}
The last inequality is due to the boundedness of maximal distribution (see Remark \ref{rem1}).
\end{proof}

This lemma shows that $I_W:M^{1,0}_g(\Omega)\mapsto \mathbb{L}^1(\Omega)$  is a linear continuous mapping. Consequently, $I_W$ can be uniquely extended to the whole domain $M^1_g(\Omega)$. We still denote
this extended mapping by
\[
\int_{\mathbb{R}^d}\eta W(dx):=I_W(\eta).
\]

{
\begin{remark}
Different from the stochastic integrals with respect to $G$-white noise in Ji and Peng \cite{PJ} which is only defined for the deterministic integrand, here the integrand can be a random field.
\end{remark}}

\subsection{Maximally distributed random fields of temporal-spatial types and related stochastic integral}

It is well-known that the framework of the classical  white noise
defined in  a probability space $(\Omega,\mathcal{F},P)$  with  $1$-dimensional temporal and $d$-dimensional spatial parameters is in fact a $\mathbb{R}^{1+d}$-indexed space type white noise.
{But  Peng \cite{Peng2011} and then  Ji and Peng \cite{PJ} observed a new phenomenon: Unlike the classical Gaussian white noise, the $d$-dimensional space-indexed $G$-white noise cannot have the property of incremental independence, thus spatial $G$-white noise is essentially different from temporal-spatial or temporal one. }
Things will become much direct  for the case of maximally distributed white noise due to the incremental  independence property of maximal distributions. This means that a time-space
maximally distributed $(1+d)$-white noise is essentially a $(1+d)$-spatial white noise. The corresponding
stochastic integral is also the same. But in order to make clear the dynamic properties,
 we still provide the description of the temporal-spatial white-noise on the time-space framework:
\begin{align*}
\mathbb{R}^+\times\mathbb{R}^d & =\{(t,x_{1},\ldots,x_{d})\in\mathbb{R}^+\times\mathbb{R}^d\},
 \end{align*}
where the index $t\in[0,\infty)$ is specially preserved to
be the index for time.

Let $\Gamma=\{A\in\mathcal{B}(\mathbb{R}^+\times\mathbb{R}^d), \lambda_A<\infty\}$, the maximally distributed white noise $\{W_{A}\}_{A\in\Gamma}$
is just like in the spatial case with dimension $1+d$.

More precisely, let
\begin{align*}
\Omega=\{\omega\in\mathbb{R}^{\Gamma}:\ \  & \omega(A\cup B)=\omega(A)+\omega(B),\\
 & \ \forall A,B\in\Gamma,\ \ A\cup B=\emptyset\},
\end{align*}
 and $W=({W}_{\gamma}(\omega)=\omega_{\gamma})_{\gamma\in\Gamma}$
the canonical random field.

For $T>0$, denote the temporal-spatial sets before time $T$ by
\begin{align*}
 \Gamma_{T}:=\{ A\in\Gamma: (s,x)\in A\Rightarrow 0\leq s<T\}.
\end{align*}
Set $\mathcal{F}_{T}=\sigma\{{W}_A, A\in\Gamma_{T}\}$,
$\mathcal{F}=\bigvee\limits _{T\geq0}\mathcal{F}_{T}$,
and
\begin{align*}L_{ip}(\Omega_{T})= & \{\varphi(W_{A_{1}},\ldots,W_{A_{n}}),\,\,\forall n\in\mathbb{N},\\
 & A_{i}\in\Gamma_{T},  i=1,\ldots,n, \varphi\in C_{b.Lip}(\mathbb{R}^{n}) \}.
\end{align*}

 We denote
\[
L_{ip}(\Omega)=\bigcup_{n=1}^{\infty}L_{ip}(\Omega_{n}).
\]

For each $X\in L_{ip}(\Omega)$, without loss of generality,
we assume $X$ has the form
\begin{align*}
X= & \varphi({W}_{A_{11}},\cdots,{W}_{{A}_{1m}},\cdots,{W}_{A_{n1}},\cdots,W_{A_{nm}}),
\end{align*}
where $A_{ij}=[t_{i-1},t_i)\times A_j$, $1\leq i\leq n, 1\leq j\leq m$, $0=t_0<t_{1}<\cdots<t_{n}<\infty$, $\{A_{1},\cdots,A_{m}\}\subset\mathcal{B}_{0}(\mathbb{R}^{d})$
are mutually disjoint and $\varphi\in C_{b.Lip}(\mathbb{R}^{n\times m})$.
Then the corresponding sublinear expectation for $X$ can be defined by

\begin{align*}
\hat{E}[X] & =\hat{E}[\varphi({W}_{A_{11}},\cdots,{W}_{A_{1m}},\cdots,{W}_{A_{n1}},\cdots,{W}_{A_{nm}})\\
 & =\max_{v_{ij}\in[{\underline{\mu}},\overline{\mu}]}\varphi(\lambda_{A_{11}}v_{11},\cdots,\lambda_{A_{1m}}v_{1m},\cdots,\lambda_{A_{n1}}v_{v_{n1},}\cdots,\lambda_{A_{nm}}v_{nm}),\\
 & \qquad \qquad \qquad \qquad \qquad \qquad \qquad \qquad \ \ \ \ \ \ \ \ \ \ \ \ \ \ \ 1\leq i\leq m,1\leq j\leq n
\end{align*}
and the related conditional expectation of $X$ under $\mathcal{F}_{t}$,
where $t_{j}\leq t<t_{j+1}$, denoted by $\hat{E}[X|\mathcal{F}_{t}]$,
is defined by
\begin{align*}
 &\hat{E}[\varphi({W}_{A_{11}},\cdots,{W}_{A_{1m}},\cdots,{W}_{A_{n1}},\cdots,{W}_{A_{nm}})|\mathcal{F}_{t}]\\
 =&\psi({W}_{A_{11}},\cdots,{W}_{A_{1m}},\cdots,{W}_{A_{j1}},\cdots,{W}_{A_{jm}}),
\end{align*}
where
\begin{align*}
\psi(x_{11},\cdots,x_{1m},\cdots,x_{j1},\cdots,x_{jm})=
\hat{E}[\varphi(x_{11},\cdots,x_{1m},\cdots,x_{j1},\cdots,x_{jm},\tilde{{W})}].
\end{align*}
Here
\[
\tilde{W}=({W}_{A_{(j+1)1}},\cdots,{W}_{A_{(j+1)m}},\cdots,{W}_{A_{n1}},\cdots,{W}_{A_{nm}}).
\]

 It is easy to verify that $\hat{E}[\cdot]$ defines a sublinear
expectation on $L_{ip}(\Omega)$ and the canonical process $({W}_{\gamma})_{\gamma\in\Gamma}$
is a one-dimensional temporal-spatial maximally distributed white noise on $(\Omega,L_{ip}(\Omega),\hat{E}).$

For each $p\geq1$, $T\geq 0$, we denote by ${L}_{g}^{p}(\Omega_T)$(resp.,
${L}_{g}^{p}(\Omega)$) the completion of $L_{ip}(\Omega_{T})$(resp.,
$L_{ip}(\Omega)$) under the norm $\|X\|_{p}:=(\hat{E}[|X|^{p}])^{1/p}$.
The conditional expectation $\hat{E}[\cdot\left|\mathcal{F}_{t}\right]:L_{ip}(\Omega)\rightarrow L_{ip}(\Omega_{t})$
is a continuous mapping under $\|\cdot\|_{p}$ and can be extended
continuously to the mapping ${L}_{g}^{p}(\Omega)\rightarrow{L}_{g}^{p}(\Omega_t)$
by
\[
|\hat{E}[X\left|\mathcal{F}_{t}\right]-\hat{E}[Y\left|\mathcal{F}_{t}\right]|\leq\hat{E}[|X-Y|\left|\mathcal{F}_{t}\right]\ \text{ for }X,Y\in L_{ip}(\Omega).
\]
It is easy to verify that the conditional expectation $\hat{E}[\cdot|\mathcal{F}_{t}]$
satisfies the following properties, and the proof is very similar to the corresponding one of Proposition 5.3 in Ji and Peng \cite{PJ}.

\begin{proposition}\label{pp1} For each
$t\geq 0$, the conditional expectation $\hat{E}[\cdot\left|\mathcal{F}_{t}\right]:{L}_{g}^{p}(\Omega)\rightarrow{L}_{g}^{p}(\Omega_t)$
satisfies the following properties: for any $X,Y\in{L}_{g}^{p}(\Omega)$,
$\eta\in{L}_{g}^{p}(\Omega_t)$,
\begin{description}
\item [{(i)}] $\hat{E}[X\left|\mathcal{F}_{t}\right]\geq\hat{E}[Y\left|\mathcal{F}_{t}\right]$\ for\ $X\geq Y$.
\item [{(ii)}] $\hat{E}[\eta\left|\mathcal{F}_{t}\right]=\eta$.
\item [{(iii)}] $\hat{E}[X+Y\left|\mathcal{F}_{t}\right]\leq\hat{E}[X\left|\mathcal{F}_{t}\right]+\hat{E}[Y\left|\mathcal{F}_{t}\right]$.
\item [{(iv)}] $\hat{E}[\eta X\left|\mathcal{F}_{t}\right]=\eta^{+}\hat{E}[X\left|\mathcal{F}_{t}\right]+\eta^{-}\hat{E}[-X\left|\mathcal{F}_{t}\right]$
if $\eta$ is bounded.
\item [{(v)}] $\hat{E}[\hat{E}[X\left|\mathcal{F}_{t}\right]|\mathcal{F}_{s}]=\hat{E}[X|\mathcal{F}_{t\wedge s}]$ for $s\geq0$.
\end{description}
\end{proposition}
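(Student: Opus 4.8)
The plan is to verify all five properties first for $X,Y\in L_{ip}(\Omega)$ directly from the defining formula for $\hat{E}[\cdot|\mathcal{F}_t]$, and then to pass to $L_g^p(\Omega)$ by the continuous extension already set up before the statement. For the first step I would use the compatibility condition to put any two given elements of $L_{ip}(\Omega)$ on a common family of generators: refining the time grid (inserting $t$ as a grid point) and the spatial family, one may write $X=\varphi(W_{A_{11}},\ldots,W_{A_{nm}})$ and $Y=\varphi'(W_{A_{11}},\ldots,W_{A_{nm}})$ over the same $A_{ij}=[t_{i-1},t_i)\times A_j$, with the pre-$t$ block consisting of those indices $(i,j)$ with $i\leq i_0$. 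Writing $\tilde{W}$ for the post-$t$ generators, the definition reads $\hat{E}[X|\mathcal{F}_t]=\psi_X(W_{\mathrm{pre}\text{-}t})$ with $\psi_X(x)=\hat{E}[\varphi(x,\tilde{W})]$.

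With this common representation, properties (i)--(iv) reduce to the corresponding properties of the unconditional $\hat{E}$ applied at each frozen value $x$ of the pre-$t$ generators. For (i), $X\geq Y$ forces $\varphi(x,\cdot)\geq\varphi'(x,\cdot)$ for every $x$, so monotonicity of $\hat{E}$ gives $\psi_X(x)\geq\psi_Y(x)$; (ii) is immediate because $\eta\in L_{ip}(\Omega_t)$ carries no post-$t$ generators, whence $\psi_\eta(x)=\hat{E}[\eta(x)]=\eta(x)$ by constant preservation; (iii) follows from sub-additivity of $\hat{E}$ applied to $\varphi(x,\cdot)+\varphi'(x,\cdot)$; and (iv) follows from positive homogeneity, since for fixed $x$ the factor $\eta(x)$ is a scalar and $\hat{E}[\eta(x)Z]=\eta^+(x)\hat{E}[Z]+\eta^-(x)\hat{E}[-Z]$, giving $\psi_{\eta X}(x)=\eta^+(x)\psi_X(x)+\eta^-(x)\psi_{-X}(x)$.

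The tower property (v) is the main point, and I would split into two cases. If $s\geq t$, then $\hat{E}[X|\mathcal{F}_t]$ already lies in $L_g^p(\Omega_t)\subset L_g^p(\Omega_s)$, so (ii) gives $\hat{E}[\hat{E}[X|\mathcal{F}_t]|\mathcal{F}_s]=\hat{E}[X|\mathcal{F}_t]=\hat{E}[X|\mathcal{F}_{t\wedge s}]$. If $s\leq t$, I would compute from the explicit formula: conditioning at $t$ freezes the pre-$t$ generators and maximizes the test function over the post-$t$ values, while conditioning the result at $s$ then maximizes over the $(s,t]$-generators. Because the generators over mutually disjoint sets are independent (Proposition \ref{p1}), the admissible range is a product box, so the nested maxima over the $(s,t]$-block and the post-$t$ block combine, via $\max_{a}\max_{b}\varphi(x,a,b)=\max_{(a,b)}\varphi(x,a,b)$, into a single maximum over the whole post-$s$ block, which is exactly $\hat{E}[X|\mathcal{F}_s]=\hat{E}[X|\mathcal{F}_{t\wedge s}]$. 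I expect this $s\leq t$ case to be the only delicate point: one must carry out the grid refinement inserting both $s$ and $t$ consistently and invoke the product structure of the domain correctly, though no genuinely new difficulty arises beyond this bookkeeping.

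Finally, since $\hat{E}[\cdot|\mathcal{F}_t]$ is a $\|\cdot\|_p$-contraction from the estimate $|\hat{E}[X|\mathcal{F}_t]-\hat{E}[Y|\mathcal{F}_t]|\leq\hat{E}[|X-Y||\mathcal{F}_t]$ stated before the proposition, the properties extend from the dense subspace $L_{ip}(\Omega)$ to all of $L_g^p(\Omega)$. Each of (i)--(iii) and (v), being an inequality or identity stable under $\|\cdot\|_p$-limits, passes to the limit directly, and (iv) passes once the bounded $\eta$ is approximated by elements of $L_{ip}(\Omega_t)$ with a uniform bound so that the products converge in $\|\cdot\|_p$; this completes the verification.
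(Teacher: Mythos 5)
Your proof is correct and is essentially the argument the paper has in mind: the paper gives no written proof, deferring to Proposition 5.3 of Ji and Peng \cite{PJ}, and that argument is precisely your scheme of verifying (i)--(v) for cylinder random variables in $L_{ip}(\Omega)$ via the frozen-variable max formula (the product-box structure of the maximal distribution is what collapses the nested maxima in the tower property (v)), followed by extension to ${L}_{g}^{p}(\Omega)$ through the contraction $|\hat{E}[X|\mathcal{F}_{t}]-\hat{E}[Y|\mathcal{F}_{t}]|\leq\hat{E}[|X-Y|\,|\mathcal{F}_{t}]$. Your insertion of $s$ and $t$ into the time grid is the right reading of the paper's definition (it is also what makes $\hat{E}[\cdot|\mathcal{F}_{t}]$ independent of the chosen representation), and your two-step approximation for (iv), keeping $\eta$ uniformly bounded so the products converge in $\|\cdot\|_{p}$, is the standard way to close that case.
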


Now we define the stochastic integral with respect to
the spatial-temporal maximally distributed white noise ${W}$, which is similar to the spatial situation.

For each given $p\geq1$, let ${M}^{p,0}(\Omega_T)$
be the collection of simple processes with the form:
\begin{equation}
f(s,x;\omega)=\sum\limits _{i=0}^{n-1}\sum_{j=1}^{m}X_{ij}(\omega)\textbf{1}_{A_{j}}(x)\textbf{1}_{[t_{i},t_{i+1})}(s),\label{simple-func}
\end{equation}
where $X_{ij}\in{L}_{g}^{p}(\Omega_T)$, $i=0,\cdots,n-1$,
$j=1,\cdots,m$, $0=t_{0}<t_{1}<\cdots<t_{n}=T$, and $\{A_{j}\}_{j=1}^{m}\subset\Gamma$
 is mutually disjoint.

\begin{remark}
Since we only require $X_{ij}\in{L}_{g}^{p}(\Omega_T)$, the integrand may ``non-adapted''. This issue is essentially different from the requirement of adaptability in the definition of stochastic integral with respect to temporal-spatial $G$-white noise in Ji and Peng \cite{PJ}.
\end{remark}

The completion of ${M}^{p,0}(\Omega_T)$ under the norm $\|\cdot\|_{{M}^{p}}$, denoted by ${M}_{g}^{p}(\Omega_T)$,
is a Banach space, where the Banach norm $\|\cdot\|_{{M}^{p}}$ is defined by
\begin{align*}
\|f\|_{{M}^{p}}:=&\left(\hat{E}\left[\int_{0}^{T}\int_{\mathbb{R}^{d}}|f(s,x)|^{p}ds\lambda(dx)\right]\right)^{\frac{1}{p}}\\=&\left\{ \hat{E}\left[\sum_{i=0}^{n-1}\sum_{j=1}^{m}|X_{ij}|^{p}(t_{i+1}-t_{i})\lambda_{A_{j}}\right]\right\} ^{\frac{1}{p}}.
\end{align*}

For $f\in{M}^{p,0}(\Omega_T)$ with the form as \eqref{simple-func}, the related stochastic integral with respect to the temporal-spatial maximally distributed white
noise ${W}$ can be defined as follows:
\begin{equation}
I_W(f)=\int_{0}^{T}\int_{\mathbb{R}^{d}}f(s,x){W}(ds,dx):=\sum\limits _{i=0}^{n-1}\sum_{j=1}^{m}X_{ij}{W}([t_j,t_{j+1})\times{A_{j}}).\label{Def-int}
\end{equation}

Similar to Lemma \ref{lemma2}, we have
\begin{lemma} \label{Wcontrol} For each $f\in{M}^{1,0}([0,T]\times\mathbb{R}^{d})$,
\begin{align}
\hat{E}\left[\left|\int_{0}^{T}\int_{\mathbb{R}^{d}}f(s,x){W}(ds,dx)\right|\right]\leq \kappa\hat{E}\left[\int_{0}^{T}\int_{\mathbb{R}^{d}}|f(s,x)|dsdx\right],\label{W continuity}
\end{align}
where $\kappa=\max\{|\overline{\mu}|,|\underline{\mu}|\}$.
\end{lemma}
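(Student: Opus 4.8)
The plan is to follow the same three-ingredient argument used for Lemma~\ref{lemma2} in the purely spatial case, now applied to the $(1+d)$-dimensional temporal-spatial index sets. Since the statement concerns a simple process $f\in M^{1,0}(\Omega_T)$ of the form \eqref{simple-func}, its stochastic integral is, by \eqref{Def-int}, the finite double sum $I_W(f)=\sum_{i=0}^{n-1}\sum_{j=1}^m X_{ij}W_{A_{ij}}$, where I abbreviate $A_{ij}=[t_i,t_{i+1})\times A_j\in\Gamma$. Because the $A_j$ are mutually disjoint and the time intervals partition $[0,T)$, the rectangles $A_{ij}$ are mutually disjoint elements of $\Gamma$ with Lebesgue measure $\lambda_{A_{ij}}=(t_{i+1}-t_i)\lambda_{A_j}$. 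The whole estimate then reduces to controlling $\hat E$ of the absolute value of this finite sum.

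First I would use the pointwise triangle inequality together with monotonicity (i) of Definition~\ref{sublinear expectation} to get $\hat E[|I_W(f)|]\le \hat E[\sum_{i,j}|X_{ij}|\,|W_{A_{ij}}|]$. The crucial step is the pointwise bound $|W_{A_{ij}}|\le \kappa\,\lambda_{A_{ij}}$ with $\kappa=\max\{|\underline{\mu}|,|\overline{\mu}|\}$: since $\{W_\gamma\}$ is a maximally distributed white noise of dimension $1+d$, Remark~\ref{rem1} guarantees that each $W_{A_{ij}}$ may be taken to live in $[\underline{\mu}\lambda_{A_{ij}},\overline{\mu}\lambda_{A_{ij}}]$, whence $|W_{A_{ij}}|\le\kappa\lambda_{A_{ij}}$ as an element of $\mathbb{L}^1(\Omega)$. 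Substituting this bound and applying monotonicity (i) and positive homogeneity (iv) once more yields $\hat E[|I_W(f)|]\le \kappa\,\hat E[\sum_{i,j}|X_{ij}|(t_{i+1}-t_i)\lambda_{A_j}]$, and the sum inside is exactly the Bochner integral $\int_0^T\int_{\mathbb{R}^d}|f(s,x)|\,ds\,\lambda(dx)$, giving \eqref{W continuity}.

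I do not expect a genuine obstacle here; the content is the reduction to the one-dimensional boundedness fact of Remark~\ref{rem1}. The only point that deserves care is the justification that Remark~\ref{rem1}, stated for the spatial white noise on $\mathcal{B}_0(\mathbb{R}^d)$, applies verbatim to the rectangles $A_{ij}$ in $\mathbb{R}^+\times\mathbb{R}^d$; this is precisely the observation emphasised at the start of Section~5 that a temporal-spatial maximally distributed white noise is nothing but a $(1+d)$-spatial one, so the marginal $W_{A_{ij}}$ is a one-dimensional maximally distributed random variable with generating function $g(\cdot)\lambda_{A_{ij}}$ and is therefore confined to $[\underline{\mu}\lambda_{A_{ij}},\overline{\mu}\lambda_{A_{ij}}]$. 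It is worth stressing that the estimate uses only this boundedness and the sublinearity of $\hat E$, and never the independence of the increments nor any adaptedness of the coefficients $X_{ij}$, which is exactly why the integrand is permitted to be non-adapted.
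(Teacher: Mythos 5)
Your proposal is correct and is precisely the argument the paper intends: the paper proves this lemma by simply invoking ``Similar to Lemma~\ref{lemma2}'', whose proof is exactly your chain of triangle inequality, monotonicity, the boundedness $|W_{A}|\leq\kappa\lambda_{A}$ from Remark~\ref{rem1}, and positive homogeneity. Your added remark that the temporal-spatial white noise is just a $(1+d)$-dimensional spatial one (so Remark~\ref{rem1} applies to the rectangles $[t_i,t_{i+1})\times A_j$) is the same identification the paper makes at the start of Section~5, so nothing is missing.
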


Thus $I_W:{M}^{1,0}(\Omega_T)\mapsto {L}_{g}^{1}(\Omega_T)$ is a continuous linear mapping.
Consequently, $I_W$ can be uniquely extend to the domain ${M}_{g}^{1}(\Omega_T)$.
We still denote this mapping by

\[
\int_{0}^{T}\int_{\mathbb{R}^{d}}f(s,x){W}(ds,dx):=I_W(f)\ \ \text{for} \ f\in{M}_{g}^{1}(\Omega_T).
\]

\begin{remark}
{Thanks to the boundedness of maximally distributed white noise, the domain of integrand $M_g^1(\Omega_T)$ is much larger since the usual requirement of adaptability for integrand can be dropped.}
\end{remark}

It is easy to check that the stochastic integral has the following
properties.

\begin{proposition} \label{stochastic integral properties} For each
$f,g\in{M}_{g}^{1}(\Omega_T)$, $0\leq s\leq r\leq t\leq T$,

\noindent (i) $\int_{s}^{t}\int_{\mathbb{R}^{d}}f(u,x){W}(du,dx)=\int_{s}^{r}\int_{\mathbb{R}^{d}}f(u,x){W}(du,dx)+\int_{r}^{t}\int_{\mathbb{R}^{d}}f(u,x){W}(du,dx).$

 \noindent(ii) $\int_{s}^{t}\int_{\mathbb{R}^{d}}(\alpha f(u,x)+g(u,x)){W}(du,dx)$\\
 $=\alpha\int_{s}^{t}\int_{\mathbb{R}^{d}}f(u,x){W}(du,dx)+\int_{s}^{t}\int_{\mathbb{R}^{d}}g(u,x){W}(du,dx)$,
where $\alpha\in{L}_{g}^{1}(\Omega_T)$ is bounded.

\end{proposition}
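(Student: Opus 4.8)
The plan is to verify both identities first on the dense subspace of simple processes ${M}^{1,0}(\Omega_T)$, where the integral is given by the explicit finite sum \eqref{Def-int}, and then to propagate them to all of ${M}_g^1(\Omega_T)$ by continuity. It is convenient to read the integral over a subinterval as $\int_s^t\int_{\mathbb{R}^d}f\,{W}(du,dx):=I_W(\mathbf{1}_{[s,t)}f)$, so that the whole discussion reduces to manipulating simple integrands and their representing time partitions.

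For the additivity (i), I would first treat the case in which $s$, $r$, $t$ all coincide with nodes of the partition $0=t_0<t_1<\cdots<t_n=T$ attached to the representation \eqref{simple-func} of $f$. There the sum \eqref{Def-int} defining $\int_s^t$ breaks at the index corresponding to $r$, giving $\int_s^t=\int_s^r+\int_r^t$ termwise. For arbitrary $s,r,t$ I would insert these three points into the partition; the only thing to check is that splitting one block $[t_i,t_{i+1})\times A_j$ into $[t_i,c)\times A_j$ and $[c,t_{i+1})\times A_j$ leaves the corresponding summand unchanged, which is exactly the incremental additivity \eqref{Gwn2}, namely $W_{[t_i,c)\times A_j}+W_{[c,t_{i+1})\times A_j}=W_{[t_i,t_{i+1})\times A_j}$ for the two disjoint sets whose union is the original block.

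For linearity (ii), I would note that if $f$ has the form \eqref{simple-func} and $\alpha\in{L}_g^1(\Omega_T)$ is bounded, then $\alpha f$ is again a simple process with coefficients $\alpha X_{ij}$, which still belong to ${L}_g^p(\Omega_T)$ because $\alpha$ is bounded; hence $I_W(\alpha f)=\sum_{i,j}\alpha X_{ij}W_{[t_i,t_{i+1})\times A_j}=\alpha\,I_W(f)$ directly from \eqref{Def-int}. After passing to a common refinement of the partitions of $f$ and $g$ (again legitimate by \eqref{Gwn2}), the defining sum of $I_W(\alpha f+g)$ separates into the two sums, yielding $I_W(\alpha f+g)=\alpha\,I_W(f)+I_W(g)$ on ${M}^{1,0}(\Omega_T)$.

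Finally I would extend both relations to ${M}_g^1(\Omega_T)$. Taking simple $f_n\to f$ and $g_n\to g$ in $\|\cdot\|_{M^1}$, Lemma \ref{Wcontrol} gives $I_W(f_n)\to I_W(f)$ and $I_W(g_n)\to I_W(g)$ in ${L}_g^1(\Omega_T)$; writing $|\alpha|\le C$, one has $\|\alpha f_n-\alpha f\|_{M^1}\le C\|f_n-f\|_{M^1}\to0$ and $\hat{E}[|\alpha I_W(f_n)-\alpha I_W(f)|]\le C\,\hat{E}[|I_W(f_n)-I_W(f)|]\to0$, so every term in the simple-process identities converges and (i), (ii) follow in full generality. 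I expect the partition-refinement argument to be the main obstacle: one must make sure that inserting nodes and forming common refinements never alters the value of the integral, and this is precisely where the additivity \eqref{Gwn2} of the maximally distributed white noise over disjoint sets is indispensable.
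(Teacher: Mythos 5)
Your proof is correct and is exactly the verification the paper leaves to the reader (the paper only remarks that these properties are ``easy to check''): establish both identities on simple processes, where invariance under inserting partition points and forming common refinements follows from the additivity \eqref{Gwn2} of $W$ over disjoint sets, and then pass to all of ${M}_{g}^{1}(\Omega_T)$ by the continuity furnished by Lemma \ref{Wcontrol}. Your attention to the two places where boundedness of $\alpha$ matters --- keeping the coefficients $\alpha X_{ij}$ in ${L}_{g}^{p}(\Omega_T)$ and controlling $\hat{E}[|\alpha I_W(f_n)-\alpha I_W(f)|]\leq C\,\hat{E}[|I_W(f_n)-I_W(f)|]$ in the limit --- is precisely what the statement's hypothesis on $\alpha$ is for, so there is no gap.
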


\begin{remark}
In particular, if we only consider temporal maximally distributed white noise and further assume that $\underline{\mu}\geq 0$. In this case, the index set $\Gamma=\{[s,t): 0\leq s<t<\infty\}$. The canonical process $W([0,t))$ is the quadratic variation process of $G$-Brownian motion, more details about the quadratic variation process can be found in Peng \cite{P2010}.
\end{remark}

\begin{acknowledgement}
This research is partially supported by NSF of China (No.L1624032, No.11526205, No.11601281), 111 Project of Chinese
SAFEA (No.B12023), National Key R\&D Program of
China (No.2018YFA0703900).
\end{acknowledgement}


\begin{thebibliography}{99.}%



\bibitem{ADEH}Artzner, P., Delbaen, F., Eber, J-M., Heath, D.: Coherent measures
of risk. Math. Financ. \textbf{9}, 203--228 (1999)

\bibitem{Dalang} Dalang, R. C.: Extending the martingale measure stochastic
integral with applications to spatially homogeneous s.p.d.e.'s. Electron. J. Probab. \textbf{4}, 1--29 (1999)

\bibitem{DaP-Z}Da Prato, G., Zabczyk, J.: Stochastic Equations in Infinite
Dimensions. Cambridge University Press, Cambridge (1992)

\bibitem{Delbaen2002}Delbaen, F.: Coherent Risk Measures. Edizioni della Normale, Pisa (2000)

\bibitem{DHP}Denis, L., Hu, M., Peng, S.: Function spaces and capacity
related to a sublinear expectation: application to $G$-Brownian motion
paths. Potential Anal. \textbf{34}, 139--161 (2011)

%

\bibitem{Fo-Sch}F\"{o}llmer, H., Schied, A.: Convex measures of risk and
trading constraints. Financ. Stoch. \textbf{6}, 429-447 (2002)

\bibitem{HL} Hu, M., Li, X.: Independence under the $G$-expectation framework. J. Theor. Probab. \textbf{27}, 1011-1020 (2014)

\bibitem{Hu-Peng} Hu, M., Peng, S.: On representation theorem of $G$-expectations
and paths of $G$-Brownian motion. Acta. Math. Sin.-English Ser. \textbf{25}, 539--546 (2009)

\bibitem{Huber}Huber, P. J.: Robust Statistics. John Wiley
\& Sons, New York (1981)

\bibitem{PJ} Ji, X., Peng, S.: Spatial and temporal white noises under sublinear $G$-expectation. Sci. China-Math.  \textbf{63}(1), 61--82 (2020)

\bibitem{LZ} Li, X., Zong, G.: On the necessary and sufficient conditions for Peng's law of large numbers under sublinear expectations, arXiv:2106.00902 (2021)

\bibitem{Peng1997} Peng, S.: Backward SDE and related $g$-expectations. In: El Karoui, N., Mazliak L. (eds.)
{Backward Stochastic Differential Equations}, pp. 141--159.  Longman, Harlow (1997)

\bibitem{P2007}Peng, S.: $G$-expectation, $G$-Brownian motion and
related stochastic calculus of It\^{o}'s type. In: Benth, F. E. et al. (eds.) Stochastic Analysis
and Applications, pp. 541--567, Springer, Berlin (2007)

\bibitem{P2008}Peng, S.: Multi-dimensional $G$-Brownian motion and
related stochastic calculus under $G$-expectation. Stoch. Process. Their Appl. \textbf{118}, 2223--2253 (2008)

\bibitem{Peng2008a}Peng, S.: A new central limit theorem under sublinear
expectations. arXiv:0803.2656 (2008)



\bibitem{P2010a}Peng, S.: Tightness, weak compactness of nonlinear
expectations and application to CLT. arXiv:1006.2541  (2010)

\bibitem{Peng2011}Peng, S.: $G$-Gaussian processes under sublinear
expectations and $q$-Brownian motion in quantum mechanics. arXiv:1105.1055
(2011)


\bibitem{P2010}Peng, S.: Nonlinear Expectations and Stochastic Calculus
under Uncertainty. Springer, Berlin  (2019)

\bibitem{P2019}Peng, S.: Law of large numbers and central limit theorem under nonlinear expectations. Probab. Uncertaint. Quant. Risk \textbf{4} (4), 1--8 (2019)

\bibitem{Walley}Walley, P.: Statistical Reasoning with Imprecise Probabilities.
Chapman and Hall, London (1991)

\bibitem{Walsh}Walsh, J. B.: An Introduction to Stochastic Partial Differential
Equations. In: Hennequin, P. L. (ed.) \'{E}cole d'\'{e}t\'{e} de Probabilit\'{e}s de Saint-Flour, XIV--1984, pp.265--439,
Springer, Berlin (1986)

\end{thebibliography}
\end{document}